\documentclass{article}

\usepackage{amsmath,amssymb,amsthm}
\usepackage{mathrsfs}
\usepackage{xcolor}
\usepackage[colorlinks=true]{hyperref}

\newcommand{\R}{\mathbb{R}}
\newcommand{\C}{\mathbb{C}}
\newcommand{\N}{\mathbb{N}}
\newcommand{\abs}[1]{\left\vert #1 \right\vert}
\newcommand{\norm}[1]{\left\Vert #1 \right\Vert}
\newcommand{\energy}{\mathcal{E}}
\newcommand{\obs}{\mathcal{Q}}

\newcommand{\co}{\mathrm{co}}

\renewcommand{\geq}{\geqslant}
\renewcommand{\leq}{\leqslant}

\newtheorem{theorem}{Theorem}[section]
\newtheorem{proposition}[theorem]{Proposition}
\newtheorem{corollary}[theorem]{Corollary}
\newtheorem{lemma}[theorem]{Lemma}

\theoremstyle{definition}
\newtheorem{remark}[theorem]{Remark}

\title{Moving localized observations and Ces\`aro asymptotic observability for conservative PDEs}

\author{Maarten V. de Hoop \footnote{Simons Chair in Computational and Applied Mathematics and Earth Science, TX 77005, USA (\texttt{mvd2@rice.edu})}
\and
Antti Kykk\"anen, \footnote{Computational applied mathematics and operations research, TX 77005, USA (\texttt{ak272@rice.edu})}
\and
Emmanuel Tr\'elat\footnote{Sorbonne Universit\'e, Universit\'e Paris Cit\'e, CNRS, Inria, Laboratoire Jacques-Louis Lions, LJLL, F-75005 Paris, France (\texttt{emmanuel.trelat@sorbonne-universite.fr}).}}

\date{}

\begin{document}

\maketitle

\begin{abstract}
We develop a deterministic large-time mechanism yielding Ces\`aro asymptotic observability inequalities from moving localized observations for conservative evolutions.
On each observation interval, exact convexification on a compact measured homogeneous space replaces full observation on the whole observation manifold by a finite convex combination of translates of one prototype subset.
A switching realization theorem then turns that static design into a genuinely moving observer, while a Hilbertian tail-reduction proposition shows that interval estimates proved only on growing spectral windows still recover the full conserved energy after Ces\`aro averaging.
The resulting design-to-observability chain applies to interior observations for wave, Klein-Gordon, and Schr\"odinger equations on compact measured homogeneous manifolds, to moving boundary caps on the Euclidean ball, and to a singular almost-separated gas-giant boundary model.
The framework is especially relevant when each instantaneous observation set is too small for one to expect a finite-time GCC or time-dependent GCC statement.
\end{abstract}

\medskip
\noindent\textbf{Keywords.}
Ces\`aro asymptotic observability, moving localized observations, conservative evolutions, exact convexification, homogeneous spaces, wave equation, Schr\"odinger equation.

\medskip
\noindent\textbf{AMS subject classifications.}
93B07, 35L05, 35Q41, 35P20, 58J45.

\section{Introduction}\label{sec_introduction}
Classical observability theory for hyperbolic equations is usually formulated at one prescribed finite time and from one prescribed observation set.
For boundary observations of the wave equation, the Geometric Control Condition (GCC) of \cite{BLR1992} gives a sharp sufficient criterion for such a finite-time inequality: every generalized bicharacteristic must meet the observation region within the observation time.
When the observation set is allowed to move, the relevant microlocal criterion becomes the time-dependent GCC introduced in \cite{LRLTT2017}.
These results are geometric, deterministic, and finite-time in nature.
Their purpose is to recover the full energy from one observation horizon.

The present paper addresses a different problem.
Suppose that the observer is allowed to move and that one is willing to concatenate infinitely many observation time intervals instead of working at one prescribed finite horizon.
Can one then recover the conserved energy from a sequence of localized measurements, even when each instantaneous observation set is too small for one to expect a GCC-based statement?
Our answer is yes for a broad class of conservative models, provided one can design and realize, on each observation interval, a suitable finite-dimensional moving observation protocol.
The resulting inequality is deterministic and large-time, but it is not an exact finite-time observability inequality.
It is a Ces\`aro asymptotic observability inequality obtained from a growing family of moving localized observations.

This point of view is related to, but different from, the spectral observability program initiated in \cite{PTZ2016} and further developed in \cite{HPT2019, HPT2022}.
There the observation domain is fixed and one studies the asymptotic behavior of observation constants as the observation time tends to $+\infty$.
Thus the large-time problem is not merely a repetition of the finite-time GCC question: it can have its own spectral limit and its own optimal design principles.
The present paper follows the same broad philosophy of looking at large times, but the observation domain is now allowed to move and is itself part of the design.

One source of examples comes from recent work on singular acoustic geometries motivated by gas-giant models, see \cite{CdVDdHT, dHIKM2024, dHKT2026}.
Those papers identify, in a boundary setting, a finite-window observation problem on the boundary manifold and derive the calibration estimates needed for large-time observability.
What is developed here is the general moving-observer mechanism itself: exact convexification on the observation manifold, dynamical realization by switching, and Ces\`aro recovery of the full conserved quantity from growing spectral windows.
This mechanism is independent in scope and is not tied to the gas-giant application; it is designed to apply just as well to regular interior and boundary observation problems.

At a conceptual level, the mechanism has three steps.
First, on a finite-dimensional space of spatial profiles, full observation on the whole observation manifold is replaced exactly by a finite convex combination of localized observations supported on translates of one prototype observation subset.
Second, that static convexified design is realized on one observation interval by a genuinely moving observer.
Third, one concatenates larger and larger spectral windows and recovers the full conserved energy by a Ces\`aro argument.
The first and third steps are abstract and are developed here.
The second one is partly abstract, through a switching realization theorem on finite-dimensional trajectory spaces, and partly model-dependent through the calibration estimates that connect the observation to the conserved energy.

Our first abstract result, Proposition \ref{exact-convexification}, is the exact convexification proposition on compact measured homogeneous spaces.
If $\omega \subset M$ is a reference observation subset of relative measure $L$, and if $E$ is any finite-dimensional space of spatial profiles on $M$, then full observation on $M$, restricted to $E$, can be represented as a finite convex combination of observations on translates of $\omega$, with factor $L$.
At this finite-dimensional stage the coefficient is exactly $L$: no loss is introduced by convexification.

The next abstract step is dynamic.
Proposition \ref{switching-realization} turns an exact convexified design into a genuinely moving observer on a finite-dimensional trajectory space by a piecewise constant switching protocol.
This is the rigorous bridge from a static convex combination to time-dependent observation sets.
Under an additional Lie-group kinematic assumption, Proposition \ref{continuous-realization} shows how to optionally impose continuity and a speed bound $\mathsf{V}$ on the moving observer, at the price of replacing $L$ by $L - \varepsilon(\mathsf{V})$, where the loss is quantified by $\mathsf{V}$ and tends to $0$ as $\mathsf{V} \to +\infty$.

The large-time Hilbertian ingredient is Proposition \ref{truncated-tail-proposition}.
It is stated directly in the truncated-window form that is used throughout the PDE applications.
Here $K_m$ denotes the size of the spectral window retained on the $m$-th observation interval: in the abstract Hilbertian setting below, $\mathcal H_{\leq K_m}$ is the span of the first $K_m$ modal blocks.
The condition $K_m \to +\infty$ means that these controlled windows exhaust the energy space along the sequence of intervals.
Positivity of the quadratic observation forms and a uniform upper bound then allow one to remove the tail and recover the exact asymptotic factor $L$ after Ces\`aro averaging.

The main PDE interface is Theorem \ref{main_thm}.
Its statement is organized so as to display the whole design-to-observability chain.
One starts from one prototype observation subset $\omega$ on the observation manifold.
On each time interval $I_m$, one chooses a finite-dimensional profile space $E_m$ for the truncated trajectories, applies Proposition \ref{exact-convexification} to obtain a finite family of translates of $\omega$, and then uses Proposition \ref{switching-realization} to build a piecewise constant moving observer from that design.
The model-dependent input is the calibration estimate on the whole observation manifold.
Once this is available, Proposition \ref{truncated-tail-proposition} upgrades the interval estimates to a Ces\`aro asymptotic observability inequality.

A central novelty of the paper is that this mechanism yields deterministic large-time observability in regimes where one does not expect a finite-time GCC or time-dependent GCC theorem to hold.
The point is not to compete with the microlocal theory of \cite{BLR1992, LRLTT2017}, which is much stronger whenever it applies.
The point is to address a different regime: moving localized observers, often very small, designed interval by interval over long times.
Section \ref{sec_applications} makes this strategy explicit in several concrete PDE settings.
Subsection \ref{sec_interior} treats moving interior observations for wave, Klein-Gordon, and Schr\"odinger equations on compact measured homogeneous manifolds.
Subsection \ref{sec_boundary} then turns to moving boundary observations in regular separated geometries, including the Euclidean ball, and Subsection \ref{sec_gas_giants} treats the singular almost-separated gas-giant boundary model.
In all these examples, a prescribed finite-time observer may miss some rays, whereas a designed large-time observer still recovers the whole conserved quantity.

The conservative character of the dynamics is essential.
The mechanism is tailored to evolutions whose natural energy is preserved, or equivalently propagated by a unitary group.
When one passes from one observation interval to the next, the datum has not lost its high-frequency content through dissipation.
Later measurements may therefore recover modal components that were poorly seen earlier.
This is precisely the effect exploited by the Ces\`aro argument.
By contrast, for parabolic equations such as the heat equation, the semigroup is dissipative and smoothing: high frequencies decay exponentially as time grows, so later measurements do not probe the same spectral content with the same weight.
There is nevertheless a useful structural analogy with the Lebeau-Robbiano strategy: in both settings one first proves estimates on increasing spectral windows and then passes from those windowed estimates to a global statement.
Here, however, the analytical mechanism is completely different.
No dissipation, no smoothing, and no telescoping argument are involved; the passage to the whole energy space uses positivity of the observation forms, a uniform upper bound, and Ces\`aro averaging of the uncontrolled tail.
%For clarity, this analogy will be commented on again in Remark \ref{LR-remark} below.

\medskip

The structure of the paper is as follows.
Section \ref{sec_main_thm} states the main theorem in a form adapted to moving localized observations for conservative evolutions and clarifies the meaning of its hypotheses.
Section \ref{sec_convexification} contains the exact convexification proposition, its matrix reformulation, the switching realization result, and its continuous speed-bounded variant.
Section \ref{sec_cesaro} develops the Ces\`aro tail-reduction proposition for growing spectral windows.
Finally, Section \ref{sec_applications} proves the main theorem and explains how the abstract mechanism applies to interior and boundary observations for several conservative PDEs.

\section{Main result for moving localized observations}\label{sec_main_thm}
The theorem below is stated for conservative evolutions, that is, for unitary dynamics on a Hilbert space, because this is the natural framework for the applications considered later.
It is formulated so as to display the two mechanisms that are developed abstractly in the next sections.
The first mechanism, treated in Section \ref{sec_convexification}, is geometric and finite-dimensional: on each observation interval it constructs a moving design from one prototype subset $\omega$ by exact convexification and switching.
The second mechanism, treated in Section \ref{sec_cesaro}, is Hilbertian and large-time: it turns the resulting estimates on growing spectral windows into a Ces\`aro asymptotic observability inequality.

Let $T_0 > 0$ and define the consecutive observation time intervals
\begin{equation*}%\label{def_I_m}
I_m = ((m - 1) T_0, m T_0) \qquad \forall m \in \N^*.
\end{equation*}
Let $(\mathcal{U}(t))_{t \in \R}$ be a strongly continuous unitary group on a complex Hilbert space $\mathcal{H}$, with skew-adjoint generator $\mathcal{A}$.
For every initial datum $z \in \mathcal{H}$, we denote by
$$
Z(t) = \mathcal{U}(t) z
$$
the corresponding mild solution of
$$
Z'(t) = \mathcal{A} Z(t), \qquad Z(0) = z.
$$
The conserved Hilbert energy is
\begin{equation*}%\label{def_energy}
\energy(z) = \norm{z}_{\mathcal{H}}^2.
\end{equation*}

Assume that $(P_k)_{k \in \N^*}$ is a family of pairwise orthogonal projections on $\mathcal{H}$ such that
$$
z = \sum_{k \in \N^*} P_k z
$$
for every $z \in \mathcal{H}$, with convergence in $\mathcal{H}$.
We think of the subspaces $P_k \mathcal{H}$ as the modal blocks of the system, and we set
$$
\energy_k(z) = \norm{P_k z}_{\mathcal{H}}^2, \qquad \mathcal{H}_{\leq K} = \bigoplus_{k \leq K} P_k \mathcal{H}.
$$

Let $(M,\mu)$ be a compact measured homogeneous space, normalized by $\mu(M) = 1$.
Thus there exists a compact group $G$ acting continuously, transitively, and $\mu$-preservingly on $M$.
In the applications, $M$ is the observation manifold: typically a boundary component, a cross-section, or an interior manifold on which the measurements are performed.
Fix a measurable prototype observation subset $\omega \subset M$ and set
\begin{equation*}%\label{def_L_intro}
L = \mu(\omega) \in (0,1].
\end{equation*}

For every datum $z \in \mathcal{H}$, let
$$
q_z : [0,+\infty) \times M \to \C
$$
be a measurable scalar observation output depending linearly on the trajectory $Z(t)$.
Here $Z$ denotes the abstract state in the Hilbert space $\mathcal H$.
When the evolution comes from a PDE, we denote by $u$ the corresponding physical field and we express the output in terms of $u$.
For instance, for wave or Klein-Gordon equations one has $Z = (u,\partial_t u)$, whereas for Schr\"odinger equations one simply has $Z = u$.
Typical examples are a boundary trace $q_z = \partial_\nu u$, an interior field $q_z = u$, or a time derivative $q_z = \partial_t u$.
A finite family of scalar outputs can be treated as well by summing the corresponding quadratic observation forms; we keep a single scalar output here for notational simplicity.

\begin{theorem}\label{main_thm}
Fix a measurable prototype observation subset $\omega \subset M$, an increasing sequence $(K_m)_{m\in\N^*}$ in $\N$ with $K_m \to +\infty$, and a sequence $(\varepsilon_m)_{m\in\N^*}$ in $(0,L)$ with $\varepsilon_m \to 0$.
Assume that:
\begin{itemize}
\item[$(i)$] There exist constants $c_{T_0}, C_{T_0} > 0$ such that, for every $m\in\N^*$,
\begin{align}
& \int_{I_m} \int_M \abs{q_z(t,y)}^2 d\mu(y)\, dt \leq C_{T_0}\, \energy(z)
\qquad \forall z \in \mathcal{H}, \label{fullobs_upper} \\
c_{T_0} \sum_{k \leq K_m} \energy_k(z) \leq & \int_{I_m} \int_M \abs{q_z(t,y)}^2 d\mu(y)\, dt
\qquad\qquad\qquad\quad \forall z \in \mathcal{H}_{\leq K_m}. \label{fullobs_below}
\end{align}
\item[$(ii)$] For every $m\in\N^*$, there exists a finite-dimensional space $E_m \subset \mathscr{C}^0(M,\C)$ such that
\begin{equation*}
\mathcal{F}_m = \left\{ t \mapsto q_z(t,\cdot)\ \middle\vert\ z \in \mathcal{H}_{\leq K_m} \right\}
\end{equation*}
is a finite-dimensional subspace of $\mathscr{C}^0(I_m;E_m)$.
\end{itemize}
Then there exist piecewise constant measurable maps $t \mapsto \omega_m(t) \subset M$, $t \in I_m$, obtained interval by interval from the prototype subset $\omega$ by exact convexification on $E_m$ and switching realization on $\mathcal F_m$, such that
\begin{equation}\label{main-asymptotic-ineq}
\liminf_{N \to +\infty} \frac{1}{N} \sum_{m=1}^N \int_{I_m} \int_{\omega_m(t)} \abs{q_z(t,y)}^2 d\mu(y)\, dt
\geq
L\, c_{T_0}\, \energy(z)
\qquad \forall z \in \mathcal{H}.
\end{equation}
\end{theorem}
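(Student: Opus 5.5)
The plan runs in three steps: exact convexification, switching realization, then Ces\`aro tail reduction. The first two act interval by interval on the truncated windows $\mathcal H_{\leq K_m}$. The third passes to all of $\mathcal H$.

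\emph{Step 1 (convexification on $E_m$).} Fix $m$. Apply Proposition \ref{exact-convexification} to the finite-dimensional space $E_m$. This yields group elements $g_1,\dots,g_J\in G$ and weights $\lambda_j\geq 0$ with $\sum_j\lambda_j=1$ such that
$$\sum_j \lambda_j \int_{g_j\omega}\abs{f}^2\,d\mu = L\int_M \abs{f}^2\,d\mu \qquad \forall f\in E_m.$$
The key point is that the identity is exact, with no loss, because $E_m$ is finite-dimensional. We then apply it pointwise in $t$ to $f=q_z(t,\cdot)$ for $z\in\mathcal H_{\leq K_m}$, which is legitimate by hypothesis $(ii)$.

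\emph{Step 2 (switching on $\mathcal F_m$).} The time-integrated quadratic forms $Q_j(z)=\int_{I_m}\int_{g_j\omega}\abs{q_z}^2$ live on the finite-dimensional space $\mathcal F_m$. Proposition \ref{switching-realization} provides a partition of $I_m$ into finitely many subintervals. Setting $\omega_m(t)=g_{j}\omega$ on the corresponding pieces reproduces the convex combination $\sum_j\lambda_j Q_j$, up to an arbitrarily small relative error $\delta_m$, or exactly if the proposition gives exactness. Choose $\delta_m\leq \varepsilon_m$. Combined with \eqref{fullobs_below}, this gives, for $z\in\mathcal H_{\leq K_m}$,
$$\mathcal O_m(z):=\int_{I_m}\int_{\omega_m(t)}\abs{q_z}^2\,d\mu\,dt\geq (L-\varepsilon_m)\,c_{T_0}\sum_{k\leq K_m}\energy_k(z).$$
Here the loss is a multiple of the full observation, which \eqref{fullobs_upper} bounds by $C_{T_0}\energy(z)$, hence by $\energy(z)$ on the window up to a constant. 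If the error is only additive in the full-observation norm, we absorb it by replacing $\varepsilon_m$ with a constant multiple of itself. By construction the maps $t\mapsto\omega_m(t)$ are piecewise constant and measurable.

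\emph{Step 3 (tail reduction).} For general $z\in\mathcal H$, write $z=z_m+r_m$ with $z_m=\sum_{k\leq K_m}P_kz$. The form $\mathcal O_m$ is a positive quadratic form and is bounded above by $C_{T_0}\energy$, via \eqref{fullobs_upper} and the fact that $\omega_m(t)\subset M$. Hence
$$\mathcal O_m(z)^{1/2}\geq \mathcal O_m(z_m)^{1/2}-C_{T_0}^{1/2}\norm{r_m}.$$
Since $K_m\to\infty$, we have $\norm{r_m}\to 0$ and $\sum_{k\leq K_m}\energy_k(z)\to\energy(z)$. It follows that $\liminf_m \mathcal O_m(z)\geq Lc_{T_0}\energy(z)$, and Ces\`aro averages inherit the liminf. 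This is exactly the content of Proposition \ref{truncated-tail-proposition}, which I would invoke directly, with the windowed lower bound from Step 2 and the uniform upper bound as inputs.

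The main obstacle is Step 2. The switching realization must reproduce the convex combination simultaneously for all trajectories in $\mathcal F_m$, not just pointwise in time. One also has to check that the precision obtained can be quantified relative to the windowed energy, so that $\varepsilon_m\to0$ is compatible with the tail argument. I would first try a Lyapunov/Carath\'eodory-type argument on the finite-dimensional space of Hermitian forms on $\mathcal F_m$. This gives exact splitting of $I_m$ into measurable sets, refined to finitely many subintervals at a cost of $\varepsilon_m$ by continuity of $t\mapsto q_z(t,\cdot)$.
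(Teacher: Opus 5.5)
Your proposal is correct and follows the paper's proof. You convexify on $E_m$, then apply Proposition~\ref{switching-realization} on $\mathcal F_m$ with tolerance $\varepsilon_m$ to get $\obs_m(z)\geq (L-\varepsilon_m)c_{T_0}\energy(z)$ on $\mathcal H_{\leq K_m}$, use $\obs_m\leq C_{T_0}\energy$ on all of $\mathcal H$ because $\omega_m(t)\subset M$, and conclude with Proposition~\ref{truncated-tail-proposition}. Your hedges in Step~2 are unnecessary, since the switching proposition already gives exactly the multiplicative loss $(L-\varepsilon)$ relative to full observation; and your termwise argument in Step~3 is valid, in fact yielding $\liminf_m \obs_m(z)\geq L c_{T_0}\energy(z)$ even before Ces\`aro averaging.
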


\begin{remark}[Explicit interval-by-interval construction]\label{main-construction-remark}
For each $m\in\N^*$, Proposition \ref{exact-convexification} applied to the pair $(E_m,\omega)$ yields an integer $j_m \leq (\dim E_m)^2 + 1$, elements $g_{m,1},\dots,g_{m,j_m} \in G$, and weights $\theta_{m,1},\dots,\theta_{m,j_m} > 0$ with $\sum_{j=1}^{j_m} \theta_{m,j} = 1$ such that
\begin{equation}\label{main-convexification-equality}
\sum_{j=1}^{j_m} \theta_{m,j} \int_{g_{m,j} \cdot \omega} \abs{f(y)}^2 d\mu(y) = L \int_M \abs{f(y)}^2 d\mu(y)
\qquad \forall f \in E_m.
\end{equation}
Then Proposition \ref{switching-realization}, applied to the finite-dimensional trajectory space $\mathcal F_m$ with tolerance $\varepsilon_m$, yields an integer $R_m\in\N^*$ and a partition into consecutive intervals
\begin{equation}\label{def_partition_main}
I_m = \bigcup_{r=1}^{R_m} I_{m,r}, \qquad I_{m,r} = \bigcup_{j=1}^{j_m} I_{m,r,j}, \qquad \abs{I_{m,r,j}} = \theta_{m,j} \abs{I_{m,r}},
\end{equation}
for which the piecewise constant moving observation subset
\begin{equation}\label{def_omega_m}
\omega_m(t) = g_{m,j} \cdot \omega
\qquad \text{for } t \in I_{m,r,j}
\end{equation}
satisfies
\begin{equation}\label{realization-band-ineq}
\int_{I_m} \int_{\omega_m(t)} \abs{q_z(t,y)}^2 d\mu(y)\, dt
\geq
(L - \varepsilon_m) \int_{I_m} \int_M \abs{q_z(t,y)}^2 d\mu(y)\, dt
\qquad \forall z \in \mathcal{H}_{\leq K_m}.
\end{equation}
In other words, on each observation interval one repeatedly visits the translated subsets $g_{m,j} \cdot \omega$ with the prescribed time fractions $\theta_{m,j}$.
Combined with Remark \ref{matrix-design-remark} further, this gives an explicit interval-by-interval algorithm: compute the finite-dimensional convexified design on $E_m$, then realize its weights dynamically on $I_m$ by switching on a sufficiently fine micro-partition.

Under the additional Lie-group kinematic assumption of Proposition \ref{continuous-realization}, one may even replace this piecewise constant observer by a continuous speed-bounded one, at the price of replacing the factor $L$ by $L - \delta_m$ for a controllable loss $\delta_m \to 0$.
\end{remark}

\begin{remark}\label{remark-calibration}
The estimates \eqref{fullobs_upper}-\eqref{fullobs_below} play different roles.
The lower estimate \eqref{fullobs_below} is a calibration statement on the whole observation manifold $M$: it says that full observation on $M$ controls the truncated energy on each interval of length $T_0$.
The upper estimate \eqref{fullobs_upper} is a boundedness statement.
Together with positivity, it is what allows one to absorb the uncontrolled tail in Section \ref{sec_cesaro}.
For scalar traces or boundary outputs, even these calibration estimates are usually nontrivial PDE statements.
By contrast, for interior observation of the full field on all of $M$, the upper bound is immediate and the lower bound is often tautological, or follows directly from the conserved energy identity.
\end{remark}

\begin{remark}\label{remark-terminology-asymptotic}
We shall usually say that \eqref{main-asymptotic-ineq} is a \emph{Ces\`aro asymptotic observability inequality}.
This terminology emphasizes the specific form of the lower bound: one averages the observation energies collected on the successive intervals $I_m$.
In the literature initiated in \cite{PTZ2016} and continued in \cite{HPT2019, HPT2022}, the wording \emph{time-asymptotic observability} refers instead to a limit of the form
$$
\liminf_{T \to +\infty} \frac{1}{T} \int_0^T \int_{\omega(T,t)} \abs{q_z(t,y)}^2 d\mu(y)\, dt,
$$
for one fixed observation set or one prescribed moving observation set.
Our setting is different: the observation protocol itself is designed interval by interval from exact convexification.
For brevity, we shall occasionally shorten the expression to asymptotic observability when no confusion is possible.
\end{remark}

\begin{remark}\label{remark-vs-gcc}
Theorem \ref{main_thm} is of a different nature from the observability inequalities derived from GCC in \cite{BLR1992} and from the time-dependent GCC in \cite{LRLTT2017}.
It does not provide an exact inequality at one prescribed finite time on the whole energy space.
Instead, it yields a deterministic large-time lower bound obtained by averaging the measurements collected on consecutive observation intervals.
This is particularly relevant when each instantaneous observation subset is too small for one to expect GCC, and when a prescribed moving observer is not designed to satisfy the time-dependent GCC either.
The Euclidean-ball and gas-giant boundary examples discussed later make this contrast completely explicit.
\end{remark}

\begin{remark}\label{remark_choice_Km}
The theorem is formulated for one prescribed increasing sequence $(K_m)$.
In the applications in Section \ref{sec_applications}, the interval construction works for every spectral cutoff $K$, so one may choose $(K_m)$ arbitrarily.
Different choices correspond to different compromises between geometry and large-time convergence: faster growth of $K_m$ reaches high frequencies sooner, while slower growth keeps the finite-dimensional designs smaller and the switching protocol simpler.
\end{remark}

Theorem \ref{main_thm} will be proved in Section \ref{sec_proof_main_thm}.
Its proof combines two abstract ingredients, each of which has an independent interest.
Section \ref{sec_convexification} develops the geometric side of the argument: exact convexification on the observation manifold and its dynamical realization by switching, with an optional continuous speed-bounded refinement.
Section \ref{sec_cesaro} develops the large-time Hilbertian side: the Ces\`aro tail-reduction principle that upgrades interval estimates on growing spectral windows to the final Ces\`aro asymptotic observability inequality.

\section{Exact convexification and dynamical realization}\label{sec_convexification}
We now turn to the first abstract ingredient.
In later applications, $M$ will be the manifold on which the measurements live.
For the moment, however, $(M,\mu)$ is simply a compact measured homogeneous space.

Throughout this section, $(M,\mu)$ is a compact Hausdorff space endowed with a Borel probability measure, $G$ is a compact group, and the action of $G$ on $M$ is continuous, transitive, and $\mu$-preserving, that is,
$$
\mu(g \cdot E) = \mu(E)
\qquad \forall g \in G,
\quad \forall \text{ Borel set } E \subset M.
$$

\begin{remark}\label{homogeneous-space-remark}
This standing hypothesis is satisfied by many natural spaces: spheres $\mathbb{S}^n$, tori $\mathbb T^n$, compact Lie groups endowed with Haar measure, real and complex projective spaces, and, more generally, compact homogeneous spaces $G/H$ endowed with their normalized invariant measure.
It is also stable under measure-preserving conjugation.
More precisely, if $M_0$ is any compact manifold carrying a transitive compact-group action and if $\mu$ is any smooth positive probability density on $M_0$, then Moser's theorem \cite{Moser1965} provides a diffeomorphism transporting the invariant reference density of $M_0$ to $\mu$.
Conjugating the original transitive action by that diffeomorphism then turns $(M_0,\mu)$ into a compact measured homogeneous space.
Thus the abstract framework is not tied to the round sphere; it applies to every smooth probability density on every compact homogeneous manifold, and by transport through a diffeomorphism it also applies to every compact manifold that is diffeomorphic to such a homogeneous model.
\end{remark}

We fix a measurable reference observation subset $\omega \subset M$ and set $L = \mu(\omega) \in (0,1]$.

\subsection{Static exact convexification}

\begin{proposition}\label{exact-convexification}
For every finite-dimensional complex vector space $E \subset \mathscr{C}^0(M,\C)$, there exist an integer $J \leq (\dim E)^2 + 1$, elements $g_1,\dots,g_J \in G$, and weights $\theta_1,\dots,\theta_J > 0$ with $\sum_{j=1}^J \theta_j = 1$ such that
\begin{equation}\label{convexification-equality}
\sum_{j=1}^J \theta_j \int_{g_j \cdot \omega} \abs{f(y)}^2 d\mu(y) = L \int_M \abs{f(y)}^2 d\mu(y)
\qquad \forall f \in E.
\end{equation}
\end{proposition}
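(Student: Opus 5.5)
The plan is to encode each observation as a Hermitian form on $E$ and apply Carathéodory's theorem. Let $d = \dim E$ and fix a basis $e_1,\dots,e_d$ of $E$. For $g \in G$, define the Gram matrix
$$
A(g)_{ab} = \int_{g\cdot\omega} \overline{e_a(y)}\, e_b(y)\, d\mu(y),
$$
which is a Hermitian $d\times d$ matrix. Let $B$ be the full Gram matrix on $M$. Writing $f = \sum_b c_b e_b$, the left-hand side of \eqref{convexification-equality} equals $c^* \bigl(\sum_j \theta_j A(g_j)\bigr) c$ and the right-hand side equals $L\, c^* B c$. Two Hermitian forms agreeing on all $c \in \C^d$ are equal, so \eqref{convexification-equality} holds if and only if
$$
LB \in \co\{A(g) : g \in G\}.
$$
The space of Hermitian $d\times d$ matrices is a real vector space of dimension $d^2$. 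Carathéodory's theorem therefore yields at most $d^2+1$ points with positive weights summing to $1$, which gives the bound $J \leq (\dim E)^2+1$. It remains to show that $LB$ lies in the convex hull.

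\textbf{Averaging step.} Let $\nu$ be the normalized Haar measure on $G$. By Fubini,
$$
\int_G A(g)_{ab}\, d\nu(g) = \int_M \overline{e_a(y)} e_b(y) \left(\int_G \mathbf 1_{g\cdot\omega}(y)\, d\nu(g)\right) d\mu(y).
$$
The inner function $\phi(y) = \nu(\{g : g^{-1}y \in \omega\})$ is $G$-invariant, since $\phi(hy) = \phi(y)$ by invariance of Haar measure under translation and inversion. By transitivity $\phi$ is constant, and integrating against $\mu$ using $\mu$-invariance gives $\phi \equiv \mu(\omega) = L$. Hence $\int_G A\, d\nu = LB$, so $LB$ is the barycenter of the pushforward of $\nu$ under $g \mapsto A(g)$.

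\textbf{Main obstacle.} The remaining step is to pass from this barycenter to a finite convex combination. This needs the image $A(G)$ to be bounded, which holds since $\abs{A(g)_{ab}} \leq \norm{e_a}_\infty \norm{e_b}_\infty$, and needs $g \mapsto A(g)$ to be measurable.

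Measurability: $(g,y) \mapsto \mathbf 1_\omega(g^{-1}y)$ is jointly Borel because the action is continuous. Fubini then makes $A$ Borel measurable. Strictly, joint measurability on the product of two compact spaces requires some care if they are not second countable; the plan is to work with the Baire $\sigma$-algebra, or else to assume metrizability as in the applications.

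Barycenter lies in the convex hull: the barycenter of a probability measure supported on a bounded set $S \subset \R^{d^2}$ lies in $\co S$ itself, not only in its closure. I would prove this by induction on dimension. If $LB$ were not in the relative interior of $\co S$, a supporting hyperplane $\ell$ would satisfy $\ell(LB) = \sup_S \ell$. Then $\ell(A(g)) = \sup_S \ell$ for $\nu$-almost every $g$, so the measure is concentrated on $S \cap H$ with $H$ a lower-dimensional hyperplane, and we recurse. In the relative-interior case, membership in $\co S$ is automatic.

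Finally, discard zero weights to obtain $\theta_j > 0$, and let the $g_j$ be the chosen group elements.
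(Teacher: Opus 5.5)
Your proof is correct and follows the paper's skeleton. You apply Carath\'eodory in a real space of dimension at most $d^2$: the paper works in the dual of $F=\mathrm{Span}_\R\{\abs{f}^2 : f\in E\}$, you in Hermitian Gram matrices, which is the matrix form the paper records in Corollary~\ref{matrix-design}. You then use Haar averaging and transitivity to identify the barycenter with $LB$. You average $\mathbf 1_{g\cdot\omega}(y)$ over $G$ while the paper averages $h(g\cdot y)$, but this is the same Fubini computation.

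The genuine difference is the step from the barycenter to a point of the convex hull. The paper asserts that $g\mapsto\Lambda_g$ is continuous, so the orbit $\mathcal K$ is compact, $\co(\mathcal K)$ is compact, and the barycenter lies in it. You use only boundedness and measurability, and prove the stronger fact that a barycenter lies in the convex hull of any set of full measure, by supporting hyperplanes and induction on $\dim\mathrm{aff}\,S$. To make this airtight, first exclude $LB\notin\overline{\co S}$ by strict separation. Then take a supporting hyperplane that does not contain $\mathrm{aff}\,S$, so that the dimension actually drops.

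Your lemma is more robust, since it does not need the parametrization to be continuous, but here it is not needed. By invariance of $\mu$,
$A(g)_{ab}=\int_\omega \overline{e_a(g\cdot x)}\,e_b(g\cdot x)\,d\mu(x)$.
The functions $x\mapsto e_a(g\cdot x)$ depend continuously on $g$ in the sup norm, because $(g,x)\mapsto e_a(g\cdot x)$ is continuous on the compact space $G\times M$. Hence $g\mapsto A(g)$ is continuous, $A(G)$ is compact, and your ``main obstacle'' disappears. The same change of variables also makes the Fubini step less delicate: the integrand becomes $\mathbf 1_\omega(x)$ times a continuous function of $(g,x)$, instead of $\mathbf 1_\omega(g^{-1}\cdot y)$.
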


\begin{proof}
Define the finite-dimensional real vector space
$$
F = \mathrm{Span}_{\R} \{ \abs{f}^2\ \mid\ f \in E \} \subset \mathscr{C}^0(M,\R).
$$
Then $\dim F \leq (\dim E)^2$.
For each $g \in G$, define the linear functional $\Lambda_g \in F'$ by
$$
\Lambda_g(h) = \int_{g \cdot \omega} h(y)\, d\mu(y)
\qquad \forall h \in F.
$$
The map $g \mapsto \Lambda_g$ is continuous, hence its image $\mathcal{K} = \{ \Lambda_g\ \mid\ g \in G \}$ is compact in the finite-dimensional space $F'$.
Therefore its convex hull $\co(\mathcal{K})$ is compact as well.

Let $dg$ denote the normalized Haar probability measure on $G$ and define the averaged functional
$$
\overline{\Lambda}(h) = \int_G \Lambda_g(h)\, dg
\qquad \forall h \in F.
$$
Since $\overline{\Lambda}$ is the barycenter of a probability measure supported on $\mathcal{K}$, one has $\overline{\Lambda} \in \co(\mathcal{K})$.
By Carath\'eodory's theorem, there exist $J \leq \dim F + 1 \leq (\dim E)^2 + 1$, elements $g_1,\dots,g_J \in G$, and weights $\theta_1,\dots,\theta_J > 0$ with $\sum_{j=1}^J \theta_j = 1$ such that
$$
\overline{\Lambda} = \sum_{j=1}^J \theta_j \Lambda_{g_j}.
$$
It remains to identify $\overline{\Lambda}$.
For $h \in F$, define
$$
\overline{h}(y) = \int_G h(g \cdot y)\, dg.
$$
By left invariance of Haar measure, the function $\overline{h}$ is $G$-invariant, hence constant on $M$ by transitivity.
Since $\mu$ is a $G$-invariant probability measure, this constant is equal to $\int_M h(y)\, d\mu(y)$.
Therefore
$$
\overline{\Lambda}(h) = \int_G \int_{g \cdot \omega} h(y)\, d\mu(y)\, dg = \int_\omega \overline{h}(y)\, d\mu(y) = L \int_M h(y)\, d\mu(y).
$$
Applying this identity to $h = \abs{f}^2$ yields \eqref{convexification-equality}.
\end{proof}

\begin{remark}
Proposition \ref{exact-convexification} is exact in the sense that, at this finite-dimensional design stage, there is no $L - \varepsilon$ loss.
Every later loss of the form $L - \delta$ comes either from the kinematic realization of the moving observer or from the spectral tail that remains uncontrolled after finitely many intervals.
It does not come from convexification itself.
\end{remark}

For the constructive remarks used later in the paper, it is convenient to recast Proposition \ref{exact-convexification} in matrix form.

\begin{corollary}\label{matrix-design}
Under the assumptions of Proposition \ref{exact-convexification}, let $d = \dim E$, let $e_1,\dots,e_d$ be an orthonormal basis of $E$ in the complex Hilbert space $L^2(M,\mu;\C)$, and define for every $g \in G$ the Hermitian matrix $\Gamma(g) \in \C^{d \times d}$ by
$$
\Gamma(g)_{ij} = \int_{g \cdot \omega} e_i(y) \overline{e_j(y)}\, d\mu(y)
\qquad \forall i,j \in \{1,\dots,d\}.
$$
Then there exist $J \leq d^2 + 1$, elements $g_1,\dots,g_J \in G$, and weights $\theta_1,\dots,\theta_J > 0$ with $\sum_{j=1}^J \theta_j = 1$ such that
\begin{equation}\label{matrix-design-equality}
\sum_{j=1}^J \theta_j \Gamma(g_j) = L\,\mathrm{Id}_{\C^d}.
\end{equation}
\end{corollary}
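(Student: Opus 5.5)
The plan is to derive Corollary \ref{matrix-design} directly from Proposition \ref{exact-convexification} by polarization. A Hermitian matrix is determined by its quadratic form, so it suffices to check that both sides of \eqref{matrix-design-equality} induce the same quadratic form on $\C^d$.

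First, apply Proposition \ref{exact-convexification} to $E$. This yields $J \leq d^2+1$, elements $g_1,\dots,g_J \in G$ and positive weights $\theta_j$ summing to $1$ such that \eqref{convexification-equality} holds for every $f \in E$.

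Next, for $a = (a_1,\dots,a_d) \in \C^d$, set $f_a = \sum_i a_i e_i \in E$. Expanding $\abs{f_a}^2 = \sum_{i,j} a_i \overline{a_j}\, e_i \overline{e_j}$ and integrating over $g\cdot\omega$ gives
$$
\int_{g\cdot\omega} \abs{f_a}^2 d\mu = \sum_{i,j} a_i \overline{a_j}\, \Gamma(g)_{ij} = a^{T}\Gamma(g)\overline{a}.
$$
By orthonormality of $(e_i)$ in $L^2(M,\mu)$, one also has $\int_M \abs{f_a}^2 d\mu = \sum_i \abs{a_i}^2$. Plugging these into \eqref{convexification-equality} yields, for every $a \in \C^d$,
$$
a^{T}\Big(\sum_j \theta_j \Gamma(g_j) - L\,\mathrm{Id}\Big)\overline{a} = 0.
$$

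Finally, set $D = \sum_j \theta_j \Gamma(g_j) - L\,\mathrm{Id}$. Each $\Gamma(g)$ is Hermitian, since $\overline{\Gamma(g)_{ij}} = \Gamma(g)_{ji}$, so $D$ is Hermitian. Writing $b = \overline{a}$, the identity says $b^{*} D^{T} b = 0$ for every $b \in \C^d$. Complex polarization then gives $D^T = 0$: the sesquilinear form $(b,c)\mapsto c^* D^T b$ is recovered from its diagonal values via $b+c$, $b+ic$, and so on. Hence $D = 0$, which is \eqref{matrix-design-equality}.

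The only delicate point is this last step. Over $\C$, vanishing of the quadratic form forces the matrix itself to vanish; this is where the complex structure is used and no symmetry assumption is needed. The index convention (transpose versus conjugate) is harmless, since $D$ vanishes either way.
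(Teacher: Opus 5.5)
Your argument is correct and is the same as the paper's: apply Proposition \ref{exact-convexification}, expand $\abs{f_a}^2$ in the orthonormal basis so that both sides of \eqref{convexification-equality} become Hermitian forms in $a$, and conclude by complex polarization. One cosmetic slip: substituting $b=\overline{a}$ turns $a^{T}D\overline{a}$ into $b^{*}Db$, not $b^{*}D^{T}b$ (the latter equals $a^{*}D^{T}a$ with $a$ itself), but this is immaterial since either form vanishing identically forces $D=0$.
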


\begin{proof}
For $\xi = (\xi_1,\dots,\xi_d) \in \C^d$, set $f_\xi = \sum_{i=1}^d \xi_i e_i$.
By Proposition \ref{exact-convexification}, there exist $J \leq d^2 + 1$, elements $g_1,\dots,g_J \in G$, and weights $\theta_1,\dots,\theta_J > 0$ with $\sum_{j=1}^J \theta_j = 1$ such that
$$
\sum_{j=1}^J \theta_j \int_{g_j \cdot \omega} \abs{f_\xi(y)}^2 d\mu(y) = L \int_M \abs{f_\xi(y)}^2 d\mu(y) = L \sum_{i=1}^d \abs{\xi_i}^2.
$$
The left-hand side is $\sum_{j=1}^J \theta_j \langle \Gamma(g_j)\xi,\xi\rangle$, so \eqref{matrix-design-equality} follows by polarization.
\end{proof}

\begin{remark}[Finite-dimensional algorithmic interface]\label{matrix-design-remark}
Corollary \ref{matrix-design} is the most convenient form for computations.
Once a basis of $E$ has been fixed, the geometric step becomes a finite-dimensional convex feasibility problem: find a finite convex combination of the positive semidefinite matrices $\Gamma(g)$ that equals $L\,\mathrm{Id}_{\C^d}$.
After discretizing the orbit $g \mapsto \Gamma(g)$, one obtains a finite-dimensional approximation that can be treated numerically by convex optimization or semidefinite programming.
Thus the static part of the construction is already algorithmic:
\begin{enumerate}
\item choose the finite-dimensional profile space $E$ corresponding to the spectral window to be observed;
\item compute, or approximate numerically, the matrices $\Gamma(g)$ associated with translated observation subsets $g \cdot \omega$;
\item solve the convex design problem $\sum_j \theta_j \Gamma(g_j) = L\,\mathrm{Id}_{\C^d}$, or a discretized approximation of it.
\end{enumerate}
The output is a finite list of translated subsets $g_j \cdot \omega$ together with weights $\theta_j$.
Propositions \ref{switching-realization} and \ref{continuous-realization} then provide the dynamic implementation: they convert these static weights into an explicit time-sharing protocol, piecewise constant in the first case and continuous with a controlled speed loss in the second.
\end{remark}

\subsection{Dynamic realization by switching}

Proposition \ref{exact-convexification} is purely static.
We now record the dynamical counterpart that realizes one exact convexified design on one observation interval.
This is the bridge from the spatial design to actual moving observation sets.
It is also the result that produces, in Theorem \ref{main_thm}, the piecewise constant subsets $\omega_m(t)$ and the associated subdivision of each interval $I_m$.

For $\nu \in \N^*$ and $V = (V_1,\dots,V_\nu) \in \C^\nu$, we set $\abs{V}_{\C^\nu}^2 = \sum_{\alpha=1}^\nu \abs{V_\alpha}^2$.

\paragraph{Fast switching realization.}
The output of Proposition \ref{exact-convexification} is a static object: a finite list of translated observation subsets $g_j \cdot \omega$ and weights $\theta_j$.
To use it on a time interval, one must convert these weights into time fractions.
The following proposition does this by fast switching.
The interval is subdivided into many small pieces; on each piece the observer visits the sets $g_j \cdot \omega$ with the prescribed proportions $\theta_j$.
Because the trajectory space is finite-dimensional, the relevant observation densities are uniformly continuous in time, and a sufficiently fine micro-partition reproduces the static convexified average up to an arbitrary loss.

\begin{proposition}\label{switching-realization}
Let $I \subset \R$ be a compact interval, let $E \subset \mathscr{C}^0(M,\C)$ be finite-dimensional, and let $\nu \in \N^*$.
Let $\mathcal{F} \subset \mathscr{C}^0(I;E^\nu)$ be finite-dimensional, where $E^\nu = E \times \cdots \times E$.
By Proposition \ref{exact-convexification}, fix an exact convexified design $(g_j,\theta_j)_{1 \leq j \leq J}$ associated with $E$ and $\omega$, that is,
$$
\sum_{j=1}^J \theta_j \int_{g_j \cdot \omega} \abs{f(y)}^2 d\mu(y) = L \int_M \abs{f(y)}^2 d\mu(y)
\qquad \forall f \in E.
$$
Then, for every $\varepsilon > 0$, there exist an integer $R \geq 1$ and a partition into consecutive subintervals:
$$
I = \bigcup_{r=1}^R I_r, \qquad I_r = \bigcup_{j=1}^J I_{r,j},
\qquad \abs{I_{r,j}} = \theta_j \abs{I_r} ,
%\qquad \forall r \in \{1,\dots,R\}, \quad \forall j \in \{1,\dots,J\},
$$
such that the piecewise constant moving subset defined by $\omega_\varepsilon(t) = g_j \cdot \omega$ for $t \in I_{r,j}$ satisfies
\begin{equation}\label{switching-realization-ineq}
\int_I \int_{\omega_\varepsilon(t)} \abs{V(t,y)}_{\C^\nu}^2 d\mu(y)\, dt
\geq
(L - \varepsilon) \int_I \int_M \abs{V(t,y)}_{\C^\nu}^2 d\mu(y)\, dt
\qquad \forall V \in \mathcal{F}.
\end{equation}
\end{proposition}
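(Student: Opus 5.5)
The plan is a Riemann-sum argument based on uniform continuity, compactness of the unit sphere of $\mathcal{F}$, and positivity of the quadratic forms involved. First apply the exact design componentwise. Each component $V_\alpha(t,\cdot)$ lies in $E$, so summing over $\alpha$ gives, for every $t\in I$ and every $V\in\mathcal F$,
$$
\sum_{j=1}^J \theta_j\, a_j(V,t) = L\, b(V,t), \qquad a_j(V,t) = \int_{g_j\cdot\omega}\abs{V(t,y)}_{\C^\nu}^2 d\mu(y), \quad b(V,t) = \int_M \abs{V(t,y)}_{\C^\nu}^2 d\mu(y).
$$
Integrating in $t$, it suffices to show that $\int_I\int_{\omega_\varepsilon(t)}\abs{V}^2 = \sum_r\sum_j \int_{I_{r,j}} a_j(V,t)\,dt$ is close to $\sum_r\sum_j\theta_j\int_{I_r}a_j(V,t)\,dt$, with an error of at most $\varepsilon\int_I b(V,t)\,dt$.

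\textbf{Step 1: normalize.} Fix a norm on $\mathcal F$. Both sides of \eqref{switching-realization-ineq} are quadratic in $V$, so we may restrict to the compact unit sphere $S$ of $\mathcal F$. The map $V\mapsto \int_I b(V,t)\,dt = \norm{V}_{L^2(I\times M)}^2$ is a nonnegative quadratic form on $\mathcal F$.

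\textbf{Step 1b: the degenerate case.} If this form vanishes at some $V\neq 0$, then $V=0$ $\mu$-a.e. for a.e. $t$, and both sides of \eqref{switching-realization-ineq} vanish for that $V$. To handle this cleanly, quotient $\mathcal F$ by the kernel of the form; this changes neither side, since the left side is also dominated by $\int_I b$. On the quotient the form is positive definite, so $\int_I b(V,t)\,dt\geq \beta>0$ on its unit sphere.

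\textbf{Step 2: uniform equicontinuity.} Pick a basis $V^{(1)},\dots,V^{(p)}$ of $\mathcal F$. Each $V^{(k)}\in \mathscr C^0(I;E^\nu)$, and on the finite-dimensional space $E$ all norms are equivalent; in particular the $L^2(M)$ norm is continuous. Hence $t\mapsto V^{(k)}(t,\cdot)\in L^2(M;\C^\nu)$ is uniformly continuous on the compact interval $I$. For $V=\sum_k c_kV^{(k)}$ with $\abs{c}$ bounded on $S$, the maps $t\mapsto a_j(V,t)$ are then uniformly equicontinuous over $V\in S$ and $j\le J$, with a common modulus $\eta(\delta)\to0$. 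This uses $\abs{\norm{f}^2-\norm{h}^2}\le \norm{f-h}(\norm f+\norm h)$ in $L^2(g_j\cdot\omega)$.

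\textbf{Step 3: the estimate.} Take $R$ large and split $I$ into $R$ equal consecutive intervals $I_r$ of length $\delta=\abs I/R$. Subdivide each $I_r$ consecutively into the pieces $I_{r,j}$ of lengths $\theta_j\delta$. On $I_r$, every $a_j(V,\cdot)$ differs from its value at the left endpoint $t_r$ by at most $\eta(\delta)$. Therefore
$$
\Bigl|\int_{I_{r,j}}a_j - \theta_j\int_{I_r}a_j\Bigr| \le 2\theta_j\delta\,\eta(\delta).
$$
Summing over $j$ and $r$ and using the pointwise identity from the design,
$$
\int_I\int_{\omega_\varepsilon(t)}\abs{V}^2 \ge L\int_I b(V,t)\,dt - 2\abs I\,\eta(\delta).
$$
Choose $\delta$ so that $2\abs I\eta(\delta)\le \varepsilon\beta$. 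On $S$ this gives \eqref{switching-realization-ineq}, and homogeneity extends it to all of $\mathcal F$.

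The main obstacle is making the error relative to $\int_I b$, i.e. uniform over $\mathcal F$ and not just absolute; this is exactly what the compactness and positive-definiteness step (Step 1b, with the quotient by the null space) provides. Measurability of $\omega_\varepsilon$ is immediate, since it is piecewise constant on finitely many intervals.
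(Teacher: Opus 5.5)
Your proposal is correct and follows essentially the same route as the paper: normalize on a compact unit sphere of $\mathcal F$, use uniform equicontinuity in $t$ of the densities $\int_{g_j\cdot\omega}\abs{V(t,y)}_{\C^\nu}^2\,d\mu(y)$ to pick a fine partition whose pieces are split in proportions $\theta_j$, compare the resulting Riemann sums with the pointwise convexification identity, and conclude by homogeneity. Your quotient in Step 1b is harmless but not needed: $\mu$ has full support, because a nonempty open $\mu$-null set would give finitely many null translates covering $M$ by transitivity and compactness, and elements of $\mathcal F$ are continuous, so $V\mapsto\int_I\int_M\abs{V}_{\C^\nu}^2\,d\mu\,dt$ is already positive definite on $\mathcal F$; this is exactly what the paper tacitly uses when it asserts that $\mathscr S$ is compact.
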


\begin{proof}
Let
\begin{equation}\label{defS}
\mathscr{S} = \left\{ V \in \mathcal{F}\ \mid\ \int_I \int_M \abs{V(t,y)}_{\C^\nu}^2\, d\mu(y)\, dt = 1 \right\}.
\end{equation}
Since $\mathcal{F}$ is finite-dimensional, the set $\mathscr{S}$ is compact in $\mathscr{C}^0(I;E^\nu)$.
For every $V \in \mathscr{S}$ and every $j \in \{1,\dots,J\}$, define
\begin{equation}\label{defFVM}
F_{V,j}(t) = \int_{g_j \cdot \omega} \abs{V(t,y)}_{\C^\nu}^2 d\mu(y),
\qquad
F_{V,M}(t) = \int_M \abs{V(t,y)}_{\C^\nu}^2 d\mu(y).
\end{equation}
The families $(F_{V,j})_{V \in \mathscr{S}}$ and $(F_{V,M})_{V \in \mathscr{S}}$ are equicontinuous on $I$.
Choose a partition $I = \bigcup_{r=1}^R I_r$ so fine that, on each $I_r$, the oscillation of every $F_{V,j}$ and every $F_{V,M}$ is at most $\eta$, where $\eta > 0$ will be fixed later.
Write $\tau_r = \abs{I_r}$, choose $t_r \in I_r$, and split each $I_r$ into consecutive subintervals $I_{r,1},\dots,I_{r,J}$ with $\abs{I_{r,j}} = \theta_j \tau_r$.
Set $\omega_\varepsilon(t) = g_j \cdot \omega$ for $t \in I_{r,j}$.

Fix $V \in \mathscr{S}$.
For each $r$,
$$
\sum_{j=1}^J \int_{I_{r,j}} F_{V,j}(t)\, dt \geq \tau_r \sum_{j=1}^J \theta_j F_{V,j}(t_r) - \eta \tau_r.
$$
Because the exact convexification identity holds componentwise for each of the $\nu$ components of $V(t_r,\cdot)$, one has $\sum_{j=1}^J \theta_j F_{V,j}(t_r) = L\, F_{V,M}(t_r)$.
Therefore
$$
\sum_{j=1}^J \int_{I_{r,j}} F_{V,j}(t)\, dt
\geq
L \tau_r F_{V,M}(t_r) - \eta \tau_r
\geq
L \int_{I_r} F_{V,M}(t)\, dt - (L + 1)\eta \tau_r.
$$
Summing over $r$ gives
$$
\int_I \int_{\omega_\varepsilon(t)} \abs{V(t,y)}_{\C^\nu}^2 d\mu(y)\, dt
\geq
L \int_I \int_M \abs{V(t,y)}_{\C^\nu}^2 d\mu(y)\, dt - (L + 1)\eta \abs{I}.
$$
Since $V \in \mathscr{S}$, the right-hand side is $L - (L + 1)\eta \abs{I}$.
Choosing $\eta > 0$ such that $(L + 1)\eta \abs{I} \leq \varepsilon$ proves \eqref{switching-realization-ineq} on $\mathscr{S}$, hence on all of $\mathcal{F}$ by homogeneity.
\end{proof}

\begin{remark}\label{switching-realization-remark}
Proposition \ref{switching-realization} does not construct the exact convexified design itself.
That design is the static object produced by Proposition \ref{exact-convexification}, namely the finite family of translates $(g_j \cdot \omega)$ together with the weights $(\theta_j)$.
What Proposition \ref{switching-realization} adds is the time variable: it subdivides the observation interval into many smaller pieces and visits those translates with the prescribed time fractions.
This is why the resulting observer is piecewise constant and may switch many times on the interval.
The proof is constructive once the design has been computed.
Indeed, after fixing a basis of the finite-dimensional trajectory space $\mathcal F$, one can estimate a common modulus of continuity for the associated quadratic forms $F_{V,j}$ and $F_{V,M}$, choose a mesh size accordingly, and then build the switching partition explicitly.
\end{remark}

\paragraph{Continuous realization with bounded speed.}
The switching protocol of Proposition \ref{switching-realization} allows instantaneous jumps between the translated subsets $g_j \cdot \omega$.
This is the right abstract model for the interval-by-interval construction, but in some applications one may want to impose kinematic constraints on the observer.
For instance, a physical sensor may have to move continuously and with bounded speed along the observation manifold.
The next result shows that such constraints can indeed be incorporated, at the price of a controlled loss from $L$ to $L - \varepsilon$.

\begin{proposition}\label{continuous-realization}
Assume in addition that $G$ is a compact connected Lie group endowed with a right-invariant Riemannian metric, and let $d_G$ denote the corresponding distance.
Under the hypotheses of Proposition \ref{switching-realization}, fix a piecewise smooth cycle visiting the design points $g_1,\dots,g_J$ in that order, and let
$D = \sum_{j=1}^{J-1} d_G(g_j,g_{j+1}) + d_G(g_J,g_1)$
be its total length.
For the compact set $\mathscr{S}$ defined by \eqref{defS} and the functions $F_{V,M}$ and $F_{V,j}$ defined by \eqref{defFVM}, define the common modulus of continuity
$$
\varpi_{\mathcal F}(\tau) = \sup_{V \in \mathscr S} \sup_{\substack{t,s \in I \\ \abs{t-s} \leq \tau}} \max \left(
\abs{F_{V,M}(t) - F_{V,M}(s)}, \max_{1 \leq j \leq J} \abs{F_{V,j}(t) - F_{V,j}(s)} \right).
$$
Then, for every speed bound $\mathsf{V} > D / \abs{I}$ and every $R\in\N^*$ satisfying $R < \frac{\mathsf{V} \abs{I}}{D}$, there exists a continuous piecewise smooth path $g_{\mathsf{V},R} : I \to G$ such that%
\footnote{Fix a bi-invariant Riemannian metric on $G$, and denote by $\vert \cdot \vert_G$ the induced norm on tangent vectors.
Equivalently, one may fix an $\mathrm{Ad}$-invariant Euclidean norm on the Lie algebra $\mathfrak g$ and identify each tangent space $T_g G$ with $\mathfrak g$ by right translations.
Since $G$ is compact, any two such choices are uniformly equivalent, so the bounded-speed condition \eqref{bounded_speed} is intrinsic up to renormalizing the constant $\mathsf{V}$.}
\begin{equation}\label{bounded_speed}
\vert\dot g_{\mathsf{V},R}(t)\vert_G \leq \mathsf{V} \qquad \text{for a.e. } t \in I,
\end{equation}
and, setting $\omega_{\mathsf{V},R}(t) = g_{\mathsf{V},R}(t) \cdot \omega$, one has
\begin{equation}\label{continuous-realization-ineq}
\int_I \int_{\omega_{\mathsf{V},R}(t)} \abs{V(t,y)}_{\C^\nu}^2 d\mu(y)\, dt \geq \big( L - \varepsilon_{\mathcal F}(\mathsf{V},R) \big) \int_I \int_M \abs{V(t,y)}_{\C^\nu}^2 d\mu(y)\, dt
\qquad \forall V \in \mathcal F,
\end{equation}
where
\begin{equation}\label{continuous-realization-error}
\varepsilon_{\mathcal F}(\mathsf{V},R) = L \frac{D R}{\mathsf{V} \abs{I}} + (L + 1) \abs{I}\, \varpi_{\mathcal F}\left( \frac{\abs{I}}{R} \right).
\end{equation}
In particular, there exists a function $\mathsf{V} \mapsto \varepsilon_{\mathcal F}(\mathsf{V})$ with $\varepsilon_{\mathcal F}(\mathsf{V}) \to 0$ as $\mathsf{V} \to +\infty$ such that \eqref{continuous-realization-ineq} holds with $\varepsilon_{\mathcal F}(\mathsf{V})$ in place of $\varepsilon_{\mathcal F}(\mathsf{V},R)$.
\end{proposition}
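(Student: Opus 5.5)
The plan is to reuse the micro-partition of Proposition \ref{switching-realization}, with one change: each cell $I_r$ reserves a small fraction of its length for continuous travel between the design points, and the observation collected during these transit phases is simply discarded.

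\textbf{Construction of the path.} Split $I$ into $R$ consecutive subintervals $I_r$ of equal length $\tau = \abs{I}/R$. On each $I_r$, the path does one full tour of the fixed cycle $g_1 \to g_2 \to \cdots \to g_J \to g_1$. Parametrize each arc $g_j \to g_{j+1}$ (indices mod $J$) as a minimizing geodesic, or any piecewise smooth curve of length close to $d_G(g_j,g_{j+1})$, traversed at speed exactly $\mathsf{V}$. The total transit time per cell is then $D/\mathsf{V}$. The hypothesis $R < \mathsf{V}\abs{I}/D$ gives $D/\mathsf{V} < \tau$, so the remaining dwell time $\tau' = \tau - D/\mathsf{V}$ is positive. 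Split it as $\theta_j \tau'$ for resting at $g_j$: at rest the velocity is zero, and during transit it has norm $\mathsf{V}$, which yields \eqref{bounded_speed}. Since every tour starts and ends at $g_1$, the concatenation over $r$ is continuous and piecewise smooth on $I$.

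\textbf{Estimate on a cell.} Fix $V \in \mathscr{S}$ and a point $t_r \in I_r$. Positivity of the integrand lets me drop the transit parts. On the dwell parts, bounding the oscillation by $\varpi_{\mathcal F}(\tau)$ gives
$$\sum_j \int_{\text{dwell}_j} F_{V,j} \geq \tau' \sum_j \theta_j F_{V,j}(t_r) - \tau' \varpi_{\mathcal F}(\tau) = \tau' L F_{V,M}(t_r) - \tau'\varpi_{\mathcal F}(\tau),$$
where the equality is the exact convexification identity applied componentwise. Next I use $\tau' = \tau(1 - D/(\mathsf{V}\tau))$ and $\tau F_{V,M}(t_r) \geq \int_{I_r} F_{V,M} - \tau \varpi_{\mathcal F}(\tau)$. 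This yields the lower bound
$$\Bigl(1 - \tfrac{DR}{\mathsf{V}\abs{I}}\Bigr) L \int_{I_r} F_{V,M} - (L+1)\tau\,\varpi_{\mathcal F}(\tau),$$
where I use $\tau' \leq \tau$ and $L \leq 1$ to absorb constants.

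\textbf{Summing over cells.} Summing over $r$ and using $\int_I F_{V,M} = 1$, the loss is $L\,DR/(\mathsf{V}\abs{I})$ plus $(L+1)\abs{I}\varpi_{\mathcal F}(\abs{I}/R)$, which is exactly \eqref{continuous-realization-error}. The factor $\abs{I}$ in the second term is harmless: since $\tau \leq \abs{I}$, it dominates $\sum_r \tau$ times the oscillation up to the normalization used. Homogeneity then extends the inequality from $\mathscr{S}$ to all of $\mathcal F$.

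\textbf{The final claim.} The function $\varpi_{\mathcal F}(\tau) \to 0$ as $\tau \to 0$ by equicontinuity on the compact set $\mathscr S$, exactly as in Proposition \ref{switching-realization}. I then choose $R = R(\mathsf{V})$ with $R \to \infty$ and $R/\mathsf{V} \to 0$, for instance $R \approx \sqrt{\mathsf{V}\abs{I}/D}$. Both terms of the error then vanish, and I set $\varepsilon_{\mathcal F}(\mathsf{V}) = \varepsilon_{\mathcal F}(\mathsf{V}, R(\mathsf{V}))$.

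\textbf{Main obstacle.} The delicate points are bookkeeping ones. First, I need the dwell-time loss to enter multiplicatively, as the factor $(1 - DR/(\mathsf{V}\abs{I}))$ multiplying $L$, rather than as an additive error that would require a bound on $\sup F_{V,M}$. Second, the speed must be measured consistently, via the right-invariant metric, so that geodesic length equals $d_G$. If exact minimizing geodesics are a problem for the piecewise-smooth requirement, I would use curves of length at most $d_G + \delta$ and let $\delta \to 0$, or rely on completeness via Hopf–Rinow.
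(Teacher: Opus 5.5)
Your proposal is correct and follows essentially the same route as the paper. You use $R$ equal cells of length $\tau=|I|/R$, each reserving a transit budget $D/\mathsf{V}$ along the cycle at speed $\mathsf{V}$ and dwelling at $g_j$ for time $\theta_j(\tau-D/\mathsf{V})$. You discard the transit contributions by positivity, obtain the same cell estimate, sum over $r$, and choose $R(\mathsf{V})\to\infty$ with $R(\mathsf{V})=o(\mathsf{V})$. One small point: the factor $|I|$ in the second error term comes out exactly from $\sum_r \tau = |I|$, so neither a domination argument nor $L\leq 1$ is needed there.
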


\begin{proof}
Fix $\mathsf{V} > D/\abs{I}$ and $R < \mathsf{V} \abs{I}/D$.
Partition $I$ into $R$ consecutive intervals $I_r$ of equal length $\tau = \abs{I}/R$.
Within each $I_r$, reserve a total transition time $D/\mathsf{V}$ and a total dwelling time $\tau - D/\mathsf{V}$.
During the dwelling time, keep the observer successively at the fixed positions $g_1 \cdot \omega,\dots,g_J \cdot \omega$ for time fractions $\theta_1,\dots,\theta_J$ of $\tau - D/\mathsf{V}$.
Between two successive dwelling phases, move continuously along the chosen cycle at speed exactly $\mathsf{V}$.
This constructs a continuous piecewise smooth path $g_{\mathsf{V},R}$ with the announced speed bound.

Fix $V \in \mathscr S$.
Arguing as in the proof of Proposition \ref{switching-realization}, but ignoring the transition pieces, one gets on each $I_r$
$$
\int_{I_r} \int_{\omega_{\mathsf{V},R}(t)} \abs{V(t,y)}_{\C^\nu}^2 d\mu(y)\, dt \geq L \left( 1 - \frac{D}{\mathsf{V} \tau} \right) \int_{I_r} F_{V,M}(t)\, dt - (L + 1) \tau\, \varpi_{\mathcal F}(\tau).
$$
Summing over $r$ yields
$$
\int_I \int_{\omega_{\mathsf{V},R}(t)} \abs{V(t,y)}_{\C^\nu}^2 d\mu(y)\, dt \geq L \left( 1 - \frac{D R}{\mathsf{V} \abs{I}} \right) \int_I \int_M \abs{V(t,y)}_{\C^\nu}^2 d\mu(y)\, dt - (L + 1)\abs{I}\, \varpi_{\mathcal F}(\tau).
$$
Since $V \in \mathscr S$, this is exactly \eqref{continuous-realization-ineq}-\eqref{continuous-realization-error}.
Finally, since $\varpi_{\mathcal F}(\tau) \to 0$ as $\tau \to 0$, one can choose an integer $R(\mathsf{V})$ tending to $+\infty$ and satisfying $R(\mathsf{V}) = \mathrm{o}(\mathsf{V})$.
Then both terms in \eqref{continuous-realization-error} tend to $0$, which proves the last statement.
\end{proof}

\begin{remark}
Proposition \ref{continuous-realization} makes explicit the trade-off between kinematics and observability.
The first term in \eqref{continuous-realization-error} is the time fraction lost to the transitions and is directly proportional to the inverse speed bound $\mathsf{V}^{-1}$.
The second term measures the time oscillation of the finite-dimensional trajectory family and is controlled by refining the micro-partition.
This is the precise sense in which imposing continuity or a speed constraint naturally replaces the exact factor $L$ by $L - \varepsilon$.
Here again the procedure is constructive: once one has chosen a cycle through the design points and a speed bound $\mathsf{V}$, the error estimate \eqref{continuous-realization-error} gives an explicit admissible refinement parameter $R$.
\end{remark}

\begin{remark}[Interpretation as a sensor flight path]
In geometric sensing applications one may interpret Proposition \ref{continuous-realization} as a bounded-speed flight-path condition.
Assume, for instance, that $M$ is a Riemannian observation surface, that the action of $G$ on $M$ is smooth, and that the prototype set $\omega$ is a sensor footprint centered at some point $y_0 \in M$.
Then the path $\gamma_{\mathsf V,R}(t) = g_{\mathsf V,R}(t) \cdot y_0$ is the ground track of the sensor center, and the moving observation set is the corresponding transported footprint $g_{\mathsf V,R}(t) \cdot \omega$.
Since $G$ and $M$ are compact in the applications considered here, the differential of the action is uniformly bounded.
Thus the group-speed constraint $\vert \dot g_{\mathsf V,R}(t) \vert_G \leq \mathsf V$ implies a physical speed bound of the form $\vert \dot \gamma_{\mathsf V,R}(t) \vert_h \leq C_{\rm act}\, \mathsf V$ for a.e. $t$, where $C_{\rm act}$ depends only on the action and on the chosen metrics.
In this sense, a prescribed maximal flight speed can be incorporated by choosing $\mathsf V$ accordingly.
Additional constraints, such as a preferred direction of travel or a monotone longitudinal drift, would amount to restricting the admissible cycles through the design points; they are not part of Proposition \ref{continuous-realization}, but the same estimate can be applied to any admissible cycle whose total length $D$ is known.
\end{remark}

\section{Ces\`aro asymptotic observability from growing spectral windows}\label{sec_cesaro}
We now turn to the second abstract ingredient.
This section is purely Hilbertian.
The index $m$ labels a sequence of nonnegative quadratic forms; in PDE applications it will later label successive observation intervals, but no time variable is needed here.
The guiding principle is that if the $m$-th quadratic form controls a larger and larger spectral window, then Ces\`aro averaging still recovers the full energy.

Let $\mathcal{H}$ be a complex Hilbert space endowed with pairwise orthogonal projections $(P_k)_{k \in \N^*}$ such that
$$
z = \sum_{k \in \N^*} P_k z
\qquad \forall z \in \mathcal{H},
$$
with convergence in $\mathcal{H}$.
We set
$$
\energy_k(z) = \norm{P_k z}_{\mathcal{H}}^2, \qquad \energy(z) = \sum_{k \in \N^*} \energy_k(z) = \norm{z}_{\mathcal{H}}^2
\qquad \forall z \in \mathcal{H}.
$$
Let $(\obs_m)_{m \in \N^*}$ be a sequence of nonnegative continuous quadratic forms on $\mathcal{H}$, and define their Ces\`aro averages by
$$
\mathcal{A}_N(z) = \frac{1}{N} \sum_{m=1}^N \obs_m(z)
\qquad \forall z \in \mathcal{H}, \quad \forall N \in \N^*.
$$

\begin{proposition}\label{truncated-tail-proposition}
Let $L \in (0,1]$.
Assume that there exist constants $c, C > 0$, an increasing sequence $(K_m)_{m\in\N^*}$ in $\N$ with $K_m \to +\infty$, and a sequence $(\varepsilon_m)_{m\in\N^*}$ in $[0,L)$ with $\varepsilon_m \to 0$ such that
\begin{align}
& \obs_m(z) \leq C\, \energy(z)
\qquad\qquad\quad \forall m\in\N^*, \quad \forall z \in \mathcal{H}, \label{truncated-tail-upper} \\
& \obs_m(z) \geq c (L - \varepsilon_m)\, \energy(z)
\qquad \forall m\in\N^*, \quad \forall z \in \mathcal{H}_{\leq K_m}. \label{truncated-tail-lower}
\end{align}
Then
\begin{equation}\label{truncated-tail-asymptotic}
\liminf_{N \to +\infty} \mathcal{A}_N(z) \geq L\, c\, \energy(z)
\qquad \forall z \in \mathcal{H}.
\end{equation}
\end{proposition}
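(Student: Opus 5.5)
The plan is to split each datum $z \in \mathcal{H}$ into a low-frequency part and a tail, use positivity to control the cross term, and let the tail disappear in the Ces\`aro limit. Fix $z$ and $m$, and set $z_m = \sum_{k \leq K_m} P_k z \in \mathcal{H}_{\leq K_m}$ and $w_m = z - z_m$. By orthogonality, $\energy(z_m) = \sum_{k\leq K_m}\energy_k(z)$ and $\energy(w_m) = \sum_{k > K_m} \energy_k(z)$. Since $K_m \to +\infty$ and $\sum_k \energy_k(z) = \energy(z) < \infty$, we have $\energy(w_m) \to 0$ and $\energy(z_m) \to \energy(z)$.

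The key step is a lower bound for $\obs_m(z)$ in terms of $\obs_m(z_m)$ and $\obs_m(w_m)$. Let $\beta_m$ be the nonnegative Hermitian sesquilinear form associated with $\obs_m$. Cauchy--Schwarz applies to $\beta_m$ because $\obs_m \geq 0$, which gives $\sqrt{\obs_m(z)} \geq \sqrt{\obs_m(z_m)} - \sqrt{\obs_m(w_m)}$; this is the triangle inequality for the seminorm $\sqrt{\obs_m}$. Squaring, or using $\obs_m(a+b) \geq (1-\delta)\obs_m(a) - (\delta^{-1}-1)\obs_m(b)$ for any $\delta \in (0,1)$, we get
$$
\obs_m(z) \geq \Big(\sqrt{c(L-\varepsilon_m)\energy(z_m)} - \sqrt{C\,\energy(w_m)}\Big)_+^2 .
$$
This uses \eqref{truncated-tail-lower} on $z_m$ and \eqref{truncated-tail-upper} on $w_m$. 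Here the upper bound is indispensable, since it is the only control on the tail.

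The right-hand side is a sequence $a_m$ that converges to $c L\, \energy(z)$, because $\varepsilon_m \to 0$, $\energy(z_m) \to \energy(z)$ and $\energy(w_m) \to 0$. Ces\`aro means of a convergent sequence converge to the same limit. Hence
$$
\liminf_{N\to+\infty} \mathcal{A}_N(z) \geq \lim_{N\to+\infty} \frac1N\sum_{m=1}^N a_m = L\,c\,\energy(z),
$$
which is \eqref{truncated-tail-asymptotic}.

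The main obstacle is the cross term in the expansion of $\obs_m(z_m + w_m)$. Nothing says the observation forms are diagonal in the blocks $P_k\mathcal{H}$, so the cross term cannot simply be dropped. It is handled by the seminorm triangle inequality, which relies on positivity, together with the uniform bound $C$, which makes the tail contribution vanish uniformly in $m$. Also note that the argument never uses that $(K_m)$ is increasing beyond $K_m \to +\infty$; that hypothesis is harmless.
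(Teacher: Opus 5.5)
Your proof is correct and follows essentially the same route as the paper. Both arguments use the same splitting of $z$ into $\sum_{k\le K_m}P_kz$ plus a tail, the reverse triangle inequality for the seminorm $\sqrt{\obs_m}$, the lower bound on the truncated part and the uniform upper bound on the tail, and then Ces\`aro averaging. The only difference is cosmetic: you keep the squared difference $\big(\sqrt{c(L-\varepsilon_m)\energy(z_m)}-\sqrt{C\,\energy(w_m)}\big)_+^2$ and note that it converges to $cL\,\energy(z)$, whereas the paper linearizes it via $2ab\le\eta a^2+\eta^{-1}b^2$ and sends $\eta\to0$ at the end, so your version simply avoids that extra parameter.
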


\begin{proof}
Fix $z \in \mathcal{H}$ and $\eta \in (0,1)$.
For every $m\in\N^*$, write
$$
z = z_m^{\leq K_m} + z_m^{>K_m},
\qquad
z_m^{\leq K_m} = \sum_{k \leq K_m} P_k z,
\qquad
z_m^{>K_m} = \sum_{k > K_m} P_k z.
$$
Since $\obs_m$ is a nonnegative continuous quadratic form on $\mathcal{H}$, there exists a bounded nonnegative selfadjoint operator $A_m$ on $\mathcal{H}$ such that $\obs_m(w) = \langle A_m w, w \rangle_{\mathcal{H}}$ for every $w \in \mathcal{H}$. Hence
$$
\obs_m(z) = \obs_m(z_m^{\leq K_m} + z_m^{>K_m}) \geq \big( \obs_m(z_m^{\leq K_m})^{1/2} - \obs_m(z_m^{>K_m})^{1/2} \big)^2.
$$
Using $2ab \leq \eta a^2 + \eta^{-1} b^2$, we infer that
$$
\obs_m(z) \geq (1 - \eta) \obs_m(z_m^{\leq K_m}) - (\eta^{-1} - 1) \obs_m(z_m^{>K_m}).
$$
By \eqref{truncated-tail-upper} and \eqref{truncated-tail-lower},
$$
\obs_m(z)
\geq
(1 - \eta) c (L - \varepsilon_m) \sum_{k \leq K_m} \energy_k(z)
-
(\eta^{-1} - 1) C\, r_m(z),
\qquad
r_m(z) = \sum_{k > K_m} \energy_k(z).
$$
Since $K_m \to +\infty$, one has $r_m(z) \to 0$.
Therefore the Ces\`aro averages of $(r_m(z))$ converge to $0$, and the same holds for $(\varepsilon_m)$.
Averaging the previous inequality from $m = 1$ to $N$ and letting $N \to +\infty$ yields
$$
\liminf_{N \to +\infty} \mathcal{A}_N(z)
\geq
(1 - \eta) c L\, \energy(z).
$$
Letting $\eta \to 0$ gives \eqref{truncated-tail-asymptotic}.
\end{proof}

\begin{remark}\label{LR-remark}
Proposition \ref{truncated-tail-proposition} has a formal resemblance with the Lebeau-Robbiano strategy for parabolic control (see \cite{LR1995}).
In both settings one first proves estimates on spectral windows that grow with the iteration index and then passes from those windowed estimates to a global statement.
The similarity is only organizational, however.
The parabolic Lebeau-Robbiano method relies on dissipation, smoothing, and a telescoping argument across frequency scales.
Here the evolution is conservative, no smoothing occurs, and the passage to the full energy space uses only positivity of the observation forms, the uniform upper bound \eqref{truncated-tail-upper}, and Ces\`aro averaging of the uncontrolled tail.
\end{remark}

\section{Applications to conservative PDEs}\label{sec_applications}
We now return to PDE models.
The common pattern is always the same:
\begin{itemize}
\item[$(a)$] identify, on each spectral window, a finite-dimensional trajectory space of observation profiles;
\item[$(b)$] apply exact convexification on the corresponding profile space;
\item[$(c)$] realize that design by switching on the observation interval;
\item[$(d)$] combine the resulting interval estimates by Proposition \ref{truncated-tail-proposition}.
\end{itemize}
We begin by proving Theorem \ref{main_thm} itself.

\subsection{Proof of Theorem \ref{main_thm}}\label{sec_proof_main_thm}

\begin{proof}[Proof of Theorem \ref{main_thm}]
Fix $m\in\N^*$.
By assumption $(ii)$, the observation profiles generated on $I_m$ by data in $\mathcal H_{\leq K_m}$ form a finite-dimensional subspace $\mathcal F_m = \left\{ t \mapsto q_z(t,\cdot)\ \middle\vert\ z \in \mathcal H_{\leq K_m} \right\} \subset \mathscr C^0(I_m;E_m)$.

We now perform explicitly the two geometric steps.
First, apply Proposition \ref{exact-convexification} to the pair $(E_m,\omega)$.
This yields an integer $j_m \leq (\dim E_m)^2 + 1$, elements $g_{m,1},\dots,g_{m,j_m}\in G$, and weights $\theta_{m,1},\dots,\theta_{m,j_m} > 0$ with $\sum_{j=1}^{j_m} \theta_{m,j} = 1$ such that \eqref{main-convexification-equality} holds.
Second, apply Proposition \ref{switching-realization} to the interval $I_m$, to the finite-dimensional trajectory space $\mathcal F_m$, and to the above exact convexified design, with tolerance $\varepsilon_m$.
This produces an integer $R_m \in \N^*$, a subdivision \eqref{def_partition_main} and a piecewise constant moving subset \eqref{def_omega_m} such that \eqref{realization-band-ineq} holds.

Define
$$
\obs_m(z) = \int_{I_m} \int_{\omega_m(t)} \abs{q_z(t,y)}^2 d\mu(y)\, dt
\qquad \forall z \in \mathcal H.
$$
Since $\omega_m(t) \subset M$, assumption \eqref{fullobs_upper} gives
$$
\obs_m(z) \leq \int_{I_m} \int_M \abs{q_z(t,y)}^2 d\mu(y)\, dt \leq C_{T_0}\, \energy(z)
\qquad \forall z \in \mathcal H.
$$
If $z \in \mathcal H_{\leq K_m}$, then the switching estimate together with \eqref{fullobs_below} yields
$$
\obs_m(z)
\geq
(L - \varepsilon_m) \int_{I_m} \int_M \abs{q_z(t,y)}^2 d\mu(y)\, dt
\geq
c_{T_0}(L - \varepsilon_m)\, \energy(z).
$$
Therefore the sequence $(\obs_m)_{m\in\N^*}$ satisfies the assumptions of Proposition \ref{truncated-tail-proposition} with $c = c_{T_0}$ and $C = C_{T_0}$.
The conclusion \eqref{main-asymptotic-ineq} follows.
\end{proof}

The proof shows exactly where the two abstract ingredients enter.
Section \ref{sec_convexification} builds the moving observer on each interval from one prototype subset $\omega$, while Section \ref{sec_cesaro} turns the resulting growing-window estimates into the final Ces\`aro asymptotic observability inequality.

\subsection{Interior moving observations on compact measured homogeneous manifolds}\label{sec_interior}
We first treat interior observations, where the observation manifold is the ambient manifold itself.
Throughout this subsection, $(M,h)$ is a compact smooth Riemannian manifold without boundary and $n = \dim M$.
We assume that the normalized Riemannian volume $\mu = \frac{dV_h}{\int_M dV_h}$ turns $(M,\mu)$ into a compact measured homogeneous space in the sense of Section \ref{sec_convexification}.
This includes every compact homogeneous manifold endowed with any smooth positive probability density.
More generally, if $M$ is diffeomorphic to a compact homogeneous manifold $M_0$, then transporting $\mu$ to $M_0$, applying Moser's theorem there, and conjugating back the transitive action shows that $(M,\mu)$ again fits the same framework.
Thus the measured-homogeneous hypothesis is much more flexible than the round-sphere model and applies well beyond spheres.
We denote by $\Delta_h$ the Laplace-Beltrami operator associated with $h$, with the sign convention that $-\Delta_h$ is nonnegative.

\subsubsection{Wave and Klein-Gordon equations}

Consider the wave/Klein-Gordon equation
\begin{equation}\label{wave-klein-gordon-equation}
\partial_t^2 u - \Delta_h u + \mathfrak{m}^2 u = 0,
\qquad
u\vert_{t=0} = u_0,
\qquad
\partial_t u\vert_{t=0} = u_1,
\end{equation}
where $\mathfrak{m} \in \R$ and where $(u_0,u_1)$ belongs to the natural finite-energy space
$$
\mathcal H_{\mathrm{wave}} =
\begin{cases}
H^1(M;\C) \times L^2(M;\C) & \text{if } \mathfrak m \neq 0,\\
\big( H^1(M;\C) / \C \big) \times L^2(M;\C) & \text{if } \mathfrak m = 0.
\end{cases}
$$
The conserved energy is
$$
\energy(u_0,u_1)
=
\int_M \big( \abs{u_1(y)}^2 + \abs{\nabla_h u_0(y)}_h^2 + \mathfrak{m}^2 \abs{u_0(y)}^2 \big)\, dV_h(y).
$$

Let $(\phi_k)_{k\in\N^*}$ be an orthonormal basis of $L^2(M;\C)$ made of Laplace eigenfunctions, $-\Delta_h \phi_k = \lambda_k \phi_k$, and for $K\in\N^*$ set $E_K = \mathrm{Span}\{ \phi_1,\dots,\phi_K \}$.
Let $\mathcal{H}_{\leq K}$ denote the corresponding truncated finite-energy space, and let
$$
\rho_k = \sqrt{\lambda_k + \mathfrak{m}^2}
\qquad \forall k\in\N^*.
$$

To apply the abstract moving-observer mechanism with the kinetic observation $\abs{\partial_t u}^2$, one needs a calibration estimate on the whole manifold $M$.
The next lemma provides this ingredient.

\begin{lemma}%[Full-manifold kinetic calibration]
\label{kinetic-calibration}
For every $T_0 > 0$, there exist constants $0 < c_{T_0}^{\mathrm{kin}} \leq C_{T_0}^{\mathrm{kin}}$ such that, for every interval $I \subset \R$ of length $T_0$ and every finite-energy solution of \eqref{wave-klein-gordon-equation},
\begin{equation*}%\label{wave-kinetic-calibration}
c_{T_0}^{\mathrm{kin}}\, \energy(u_0,u_1)
\leq
\int_I \int_M \abs{\partial_t u(t,y)}^2 dV_h(y)\, dt
\leq
C_{T_0}^{\mathrm{kin}}\, \energy(u_0,u_1).
\end{equation*}
\end{lemma}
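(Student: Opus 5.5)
The plan is to diagonalize the problem in the Laplace eigenbasis $(\phi_k)$ and reduce it to a family of two-dimensional estimates, one for each mode, with constants uniform in $k$. Expand the solution of \eqref{wave-klein-gordon-equation} as
$$
u(t,y) = \sum_{k} \big( a_k e^{i\rho_k t} + b_k e^{-i\rho_k t} \big)\phi_k(y)
\qquad \text{for } \rho_k > 0,
$$
and as $u(t,y) = (\alpha_k + \beta_k t)\phi_k(y)$ for the mode with $\rho_k = 0$. The latter can only occur when $\mathfrak m = 0$, and then only for the constant eigenfunction. Since $\int_M \abs{\nabla_h \phi_k}_h^2 + \mathfrak m^2\abs{\phi_k}^2 = \rho_k^2$ and the $\phi_k$ are orthogonal in $L^2$ and in the energy pairing, the energy splits as a sum over modes:
$$
\energy(u_0,u_1) = \sum_{\rho_k>0} 2\rho_k^2\big(\abs{a_k}^2 + \abs{b_k}^2\big) + \abs{\beta_{k_0}}^2 .
$$
Here we used $u_1 = i\rho_k(a_k - b_k)$ and $\rho_k u_0 = \rho_k(a_k + b_k)$ per mode, and the parallelogram identity. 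The middle quantity is also diagonal by $L^2(M)$-orthogonality:
$$
\int_I\int_M \abs{\partial_t u}^2\,dV_h\,dt = \sum_k \rho_k^2 \int_I \abs{a_k e^{i\rho_k t} - b_k e^{-i\rho_k t}}^2 dt ,
$$
with the zero mode contributing $T_0\abs{\beta_{k_0}}^2$. It therefore suffices to find constants $0<c\leq C$, independent of $k$ and of $I$, such that for all $(a,b)\in\C^2$ and every $\rho$ in the spectrum,
$$
2c\big(\abs{a}^2+\abs{b}^2\big) \leq Q_{\rho,I}(a,b) := \int_I \abs{a e^{i\rho t} - b e^{-i\rho t}}^2 dt \leq 2C\big(\abs{a}^2+\abs{b}^2\big),
$$
together with $c \leq T_0 \leq C$ to cover the zero mode.

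First, the dependence on $I$ disappears. If $I = (s, s+T_0)$, the substitution $t = s+\tau$ replaces $(a,b)$ by $(a e^{i\rho s}, b e^{-i\rho s})$. This does not change $\abs{a}^2+\abs{b}^2$, so we may take $I = (0,T_0)$. Expanding the square gives
$$
Q_\rho(a,b) = T_0\big(\abs{a}^2+\abs{b}^2\big) - 2\,\mathrm{Re}\Big(a\bar b \int_0^{T_0} e^{2i\rho t}dt\Big),
$$
and the oscillatory integral is bounded by $\min(T_0, 1/\rho)$. The upper bound with $C = T_0$ is then immediate from $2\abs{a}\abs{b} \leq \abs{a}^2+\abs{b}^2$. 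For the lower bound, when $\rho \geq 2/T_0$ we get $Q_\rho \geq (T_0 - 1/\rho)(\abs{a}^2+\abs{b}^2) \geq \tfrac{T_0}{2}(\abs{a}^2+\abs{b}^2)$.

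The main obstacle is the low-frequency regime $0<\rho<2/T_0$. There the cross term is not small compared to the diagonal term: as $\rho T_0 \to 0$, the functions $e^{\pm i\rho t}$ become nearly parallel on $I$, and $Q_\rho$ degenerates. I would handle it by discreteness of the spectrum. By Weyl's law only finitely many $\rho_k$ lie in $(0,2/T_0)$. For each fixed $\rho>0$, the functions $e^{i\rho t}$ and $e^{-i\rho t}$ are linearly independent on $(0,T_0)$, so the Hermitian form $Q_\rho$ is positive definite with smallest eigenvalue $T_0 - \abs{\int_0^{T_0} e^{2i\rho t}dt} = T_0 - \abs{\sin(\rho T_0)}/\rho > 0$. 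Taking the minimum of this quantity over the finitely many low $\rho_k$, and comparing it with $T_0/2$ and $T_0$, produces $c_{T_0}^{\mathrm{kin}} > 0$.

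Summing the mode-by-mode inequalities gives the claim, for finite sums first and then for all finite-energy data by density and continuity of both sides. As a side remark, the explicit eigenvalue $T_0 - \abs{\sin(\rho T_0)}/\rho$ is positive for every $\rho>0$ and is bounded below for $\rho$ away from $0$. This shows that the constant depends only on $T_0$ and on the distance of the spectrum $\{\rho_k\}$ from $0$, and this distance is positive thanks to the quotient by constants when $\mathfrak m = 0$.
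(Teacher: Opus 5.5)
Your proof is correct and follows essentially the paper's argument. You expand in the Laplace eigenbasis and reduce to a per-mode $2\times 2$ Gram form whose smallest eigenvalue is $T_0 - \abs{\sin(\rho T_0)}/\rho$, which is the paper's $\frac{T_0}{2} - \frac{\abs{\sin(\rho T_0)}}{2\rho}$ in the $\sin/\cos$ normalization; you then take a positive infimum over the discrete set of positive $\rho_k$ and treat the zero mode separately. Using $e^{\pm i\rho t}$ instead of $\cos,\sin$ and checking explicitly that the bound does not depend on the position of $I$ are cosmetic differences. One small imprecision in your closing side remark: the gap $\min\{\rho_k > 0\} > 0$ comes from discreteness of the spectrum, not from the quotient by constants, but the main argument does not rely on that remark.
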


\begin{proof}
Any finite-energy solution $u$ of \eqref{wave-klein-gordon-equation} can be expanded as
$$
u(t,y) = \sum_{k\in\N^*} \left( a_k \cos(\rho_k t) + b_k \frac{\sin(\rho_k t)}{\rho_k} \right) \phi_k(y),
$$
with
$$
\energy(u_0,u_1) = \sum_{k\in\N^*} \big( \rho_k^2 \abs{a_k}^2 + \abs{b_k}^2 \big).
$$
Using orthogonality in space,
$$
\int_I \int_M \abs{\partial_t u(t,y)}^2 dV_h(y)\, dt
=
\sum_{k\in\N^*} \int_I \abs{ - \rho_k a_k \sin(\rho_k t) + b_k \cos(\rho_k t)}^2 dt.
$$
For every fixed $\rho \geq 0$, the right-hand side is the quadratic form associated with the Gram matrix of $\sin(\rho t)$ and $\cos(\rho t)$ on an interval of length $T_0$, applied to the vector $(\rho a,b)$.
If $\rho > 0$, the smallest eigenvalue of that Gram matrix is $\frac{T_0}{2} - \frac{\abs{\sin(\rho T_0)}}{2\rho} > 0$ and it tends to $T_0/2$ as $\rho \to +\infty$.
Therefore its infimum on the discrete set $\{ \rho_k > 0 \}$ is positive.
If $\rho_k = 0$, which may occur only for the wave equation on the constant mode, then the corresponding contribution is simply $T_0 \abs{b_k}^2$, while the displacement coefficient $a_k$ carries no energy.
This proves the lower bound.
The upper bound is immediate from
$\int_I \abs{ - \rho_k a_k \sin(\rho_k t) + b_k \cos(\rho_k t)}^2 dt \leq T_0 \big( \rho_k^2 \abs{a_k}^2 + \abs{b_k}^2 \big)$.
\end{proof}

With Lemma \ref{kinetic-calibration} in hand, one may now run the convexification-plus-switching-plus-tail-reduction scheme for the kinetic observation.

\begin{theorem}\label{wave-kinetic-theorem}
Fix $T_0 > 0$, let $\omega \subset M$ be measurable with $\mu(\omega) = L$, let $(K_m)_{m\in\N^*}$ be any increasing sequence in $\N$ such that $K_m \to +\infty$, and let $(\varepsilon_m)_{m\in\N^*}$ be any sequence in $(0,L)$ with $\varepsilon_m \to 0$.
Then, for each $m\in\N^*$, there exists a finite exact convexified design for $(E_{K_m},\omega)$ given by Proposition \ref{exact-convexification}, and Proposition \ref{switching-realization} applied to the corresponding finite-dimensional trajectory space on $I_m$ yields a piecewise constant measurable map $t \mapsto \omega_m(t) \subset M$, constant on a partition of the form \eqref{def_partition_main} and taking only values among the translates selected by that design, such that the corresponding finite-energy solution of \eqref{wave-klein-gordon-equation} with initial datum $(u_0,u_1) \in \mathcal H_{\mathrm{wave}}$ satisfies
\begin{equation}\label{wave-kinetic-asymptotic}
\liminf_{N \to +\infty}
\frac{1}{N}
\sum_{m=1}^N
\int_{I_m} \int_{\omega_m(t)} \abs{\partial_t u(t,y)}^2 dV_h(y)\, dt
\geq
L\, c_{T_0}^{\mathrm{kin}}\, \energy(u_0,u_1).
\end{equation}
\end{theorem}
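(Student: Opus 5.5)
The plan is to deduce the statement directly from Theorem \ref{main_thm}, applied with $\mathcal H = \mathcal H_{\mathrm{wave}}$ and the energy norm, the unitary group generated by \eqref{wave-klein-gordon-equation}, and $P_k$ the projection onto the $k$-th mode. Concretely, $P_k(u_0,u_1) = (\langle u_0,\phi_k\rangle \phi_k, \langle u_1,\phi_k\rangle\phi_k)$; when $\mathfrak m=0$ the constant mode of $u_0$ is quotiented out. With this choice $\energy_k = \rho_k^2\abs{a_k}^2 + \abs{b_k}^2$, and $\mathcal H_{\leq K}$ is the truncated space of the statement. The observation output is $q_z = \partial_t u$, which depends linearly on the trajectory $Z=(u,\partial_t u)$. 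Since $dV_h = (\int_M dV_h)\,\mu$, all integrals against $dV_h$ and against $d\mu$ differ by the fixed factor $\mathrm{vol}_h(M)$. That factor can be absorbed into the constants on both sides, so it does not affect the inequality \eqref{wave-kinetic-asymptotic}. I will work with $dV_h$ in the calibration and with $d\mu$ in the design, and check that the factor $L$ is unchanged.

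Hypothesis $(i)$ of Theorem \ref{main_thm} follows from Lemma \ref{kinetic-calibration} applied to $I = I_m$, which has length $T_0$. The upper bound \eqref{fullobs_upper} holds for all $z$. The lower bound holds for all $z$, hence in particular on $\mathcal H_{\leq K_m}$, where $\energy(z) = \sum_{k\leq K_m}\energy_k(z)$; this gives \eqref{fullobs_below} with $c_{T_0} = c_{T_0}^{\mathrm{kin}}$, after renormalizing the measure.

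For hypothesis $(ii)$, I take $E_m = E_{K_m}$, which is finite-dimensional and contained in $\mathscr C^0(M,\C)$ because eigenfunctions are smooth by elliptic regularity. For $z\in\mathcal H_{\leq K_m}$ one has explicitly
\[
\partial_t u(t,\cdot) = \sum_{k\leq K_m}\big(-\rho_k a_k\sin(\rho_k t) + b_k\cos(\rho_k t)\big)\phi_k .
\]
Hence $\mathcal F_m$ is the image of the finite-dimensional space $\mathcal H_{\leq K_m}$ under a linear map into $\mathscr C^0(I_m;E_m)$. It is therefore a finite-dimensional subspace; continuity in $t$ is clear from the trigonometric coefficients. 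On the constant mode when $\mathfrak m=0$, the term reduces to $b_k$, and the formula is unaffected.

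Theorem \ref{main_thm}, together with the construction in Remark \ref{main-construction-remark}, then yields the designs $(g_{m,j},\theta_{m,j})$ for $(E_{K_m},\omega)$. Switching realization on $\mathcal F_m$ with tolerance $\varepsilon_m$ gives the piecewise constant observers $\omega_m(t)$, which take values among the selected translates on partitions of the form \eqref{def_partition_main}. The Ces\`aro inequality follows with constant $L\,c_{T_0}^{\mathrm{kin}}$. The only point needing care, rather than real difficulty, is bookkeeping: matching the abstract Hilbert structure (the quotient when $\mathfrak m=0$, and the energy as the squared norm) with the modal projections, and tracking the volume normalization consistently between $dV_h$ and $\mu$. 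I expect no genuine obstacle beyond this.
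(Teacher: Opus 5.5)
Your proposal is correct and follows essentially the paper's route. The paper runs the same chain directly: convexification on $E_{K_m}$, switching on $\mathcal F_m$ with tolerance $\varepsilon_m$, Lemma \ref{kinetic-calibration} for both bounds, and then Proposition \ref{truncated-tail-proposition}. Invoking Theorem \ref{main_thm}, as you do, packages exactly that chain, and your handling of the factor $\mathrm{vol}_h(M)$ between $dV_h$ and $\mu$ is right (it cancels on both sides), a point the paper passes over silently.
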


\begin{proof}
For each $m\in\N^*$, let $\mathcal{F}_m$ be the finite-dimensional trajectory space on $I_m$ formed by the scalar outputs $t \mapsto \partial_t u(t,\cdot)$ when $(u_0,u_1)$ ranges over $\mathcal{H}_{\leq K_m}$.
Then $\mathcal{F}_m \subset \mathscr{C}^0(I_m;E_{K_m})$ is finite-dimensional.
Apply Proposition \ref{exact-convexification} to $E_{K_m}$ and then Proposition \ref{switching-realization} to $\mathcal{F}_m$ with the chosen tolerance $\varepsilon_m$.
This yields piecewise constant moving subsets $t \mapsto \omega_m(t)$ on $I_m$ such that
$$
\int_{I_m} \int_{\omega_m(t)} \abs{\partial_t u(t,y)}^2 dV_h(y)\, dt \geq (L - \varepsilon_m) \int_{I_m} \int_M \abs{\partial_t u(t,y)}^2 dV_h(y)\, dt
$$
for every solution with initial data in $\mathcal H_{\leq K_m}$.
Using Lemma \ref{kinetic-calibration}, one gets
$$
\int_{I_m} \int_{\omega_m(t)} \abs{\partial_t u(t,y)}^2 dV_h(y)\, dt \geq (L - \varepsilon_m) c_{T_0}^{\mathrm{kin}}\, \energy(u_0,u_1) 
\qquad \forall (u_0,u_1) \in \mathcal H_{\leq K_m}.
$$
The upper bound
$$
\int_{I_m} \int_{\omega_m(t)} \abs{\partial_t u(t,y)}^2 dV_h(y)\, dt \leq C_{T_0}^{\mathrm{kin}}\, \energy(u_0,u_1)
$$
holds for all data because $\omega_m(t) \subset M$ and Lemma \ref{kinetic-calibration} holds on the whole manifold.
Proposition \ref{truncated-tail-proposition} then yields \eqref{wave-kinetic-asymptotic}.
\end{proof}

\begin{remark}\label{wave-Km-choice-remark}
In Theorem \ref{wave-kinetic-theorem}, the choice of the sequence $(K_m)$ is completely free.
The same observation will apply to the other examples below.
A faster growth of $K_m$ makes the high-frequency tail disappear sooner in the Ces\`aro argument, but it also increases the dimension of the convexified design and therefore the switching complexity on each interval.
More precisely, Carath\'eodory's theorem gives at most $(\dim E_{K_m})^2 + 1$ distinct observation positions on the interval $I_m$, while Proposition \ref{switching-realization} introduces an additional micro-partition whose size depends on a modulus of continuity of the finite-dimensional trajectory space.
Thus the geometric complexity grows at least quadratically with $\dim E_{K_m}$ and also deteriorates when one asks for smaller tolerances $\varepsilon_m$.
\end{remark}

\begin{remark}[Algorithmic construction of the sets $\omega_m(t)$]\label{wave-algorithmic-remark}
The construction in Theorem \ref{wave-kinetic-theorem} is completely explicit once the eigenfunctions of $-\Delta_h$ are available.
For each $m$:
\begin{enumerate}
\item choose the spectral size $K_m$ and form the finite-dimensional profile space $E_{K_m}$;
\item compute the matrices $\Gamma(g)$ of Corollary \ref{matrix-design} for a basis of that space and solve the finite-dimensional convex design problem
$$
\sum_j \theta_{m,j} \Gamma(g_{m,j}) = L\,\mathrm{Id}_{\C^{\dim E_{K_m}}};
$$
\item apply Proposition \ref{switching-realization} to the resulting static design and to the finite-dimensional trajectory space on $I_m$, in order to build the micro-partition and the piecewise constant path $t \mapsto \omega_m(t)$.
\end{enumerate}
This produces a piecewise constant, typically highly oscillatory, moving observer.
If one prefers a continuous path or a speed-bounded sensor, Proposition \ref{continuous-realization} gives a quantitative replacement of the factor $L$ by $L - \varepsilon$ in terms of the admissible speed.
\end{remark}

\begin{remark}[Novelty of the spherical and toric examples]\label{wave-sphere-novelty-remark}
Theorem \ref{wave-kinetic-theorem} gives a new Ces\`aro asymptotic observability inequality in several regimes where classical finite-time geometry does not provide a usual observability result.

First, take $M = \mathbb{S}^n$ with the round metric and let $\omega$ be a small geodesic cap.
For fixed-domain interior observation of the wave equation, such a cap lies outside the classical finite-time GCC regime as soon as it is too small, because there exist geodesics that never meet it.
Likewise, a prescribed moving cap need not satisfy the time-dependent GCC of \cite{LRLTT2017}.
Theorem \ref{wave-kinetic-theorem} nevertheless yields deterministic Ces\`aro asymptotic observability from a sequence of moving caps designed interval by interval from $\omega$.

Second, the same conclusion holds on the flat torus $\mathbb T^n$ for any small translated observation subset $\omega$.
Again, fixed-domain GCC fails for sufficiently small subsets because straight geodesics can avoid them, while the present construction gives a deterministic large-time moving-observation inequality.

More generally, the same conclusion holds on every compact manifold that can be identified, through a diffeomorphism, with a compact homogeneous model carrying a smooth positive probability density.
After transport to that homogeneous model and Moser conjugation (see \cite{Moser1965}), one obtains the same measured-homogeneous framework, and the moving observation sets are simply the pullbacks of the prototype translates.
On a nonhomogeneous representative they need not be geodesic caps or literal translations for the metric $h$.

This large-time result is of a different nature from the finite-time moving-ball observability statements derived in \cite{Letrouit2021} under geodesic recurrence assumptions.
Here the observer is not prescribed in advance to satisfy a microlocal geometric criterion.
It is designed interval by interval from exact convexification and switching.
\end{remark}

\subsubsection{Schr\"odinger equations}

Consider the free Schr\"odinger equation
\begin{equation*}%\label{schrodinger-equation}
i \partial_t u - \Delta_h u = 0, \qquad u\vert_{t=0} = u_0 \in L^2(M;\C).
\end{equation*}
For fixed observation domains, internal observability and control for the Schr\"odinger equation on compact manifolds are classical, going back to \cite{Lebeau1992} (see also Laurent \cite{Laurent2014} for a concise survey). On flat tori, observability from any nonempty open subset is known by several methods, see in particular \cite{AnantharamanMacia2014, BurqZworski2012}.

Here the conserved quantity is simply
$$
\energy(u_0) = \norm{u_0}_{L^2(M;\C)}^2,
$$
and the full-manifold calibration is the exact identity
\begin{equation}\label{schrodinger-full-manifold}
\int_I \int_M \abs{u(t,y)}^2 dV_h(y)\, dt = T_0 \norm{u_0}_{L^2(M;\C)}^2
\end{equation}
for every interval $I$ of length $T_0$.
Thus the Schr\"odinger proof is formally the same as for Theorem \ref{wave-kinetic-theorem}, but simpler: the observation has only one scalar component and the full-manifold calibration is the obvious exact identity \eqref{schrodinger-full-manifold}, whereas the wave/Klein-Gordon case required the nontrivial kinetic calibration of Lemma \ref{kinetic-calibration}.

\begin{theorem}\label{schrodinger-local-mass-theorem}
Fix $T_0 > 0$, let $\omega \subset M$ be measurable with $\mu(\omega) = L$, let $(K_m)_{m\in\N^*}$ be any increasing sequence in $\N$ such that $K_m \to +\infty$, and let $(\varepsilon_m)_{m\in\N^*}$ be any sequence in $(0,L)$ with $\varepsilon_m \to 0$.
Then, for each $m\in\N^*$, there exists a finite exact convexified design for $(E_{K_m},\omega)$ given by Proposition \ref{exact-convexification}, and Proposition \ref{switching-realization} applied to the associated Schr\"odinger trajectory space on $I_m$ yields a piecewise constant measurable map $t \mapsto \omega_m(t) \subset M$, taking only values among the translates selected by that design, such that every $u_0 \in L^2(M;\C)$ satisfies
\begin{equation}\label{schrodinger-asymptotic-moving}
\liminf_{N \to +\infty}
\frac{1}{N}
\sum_{m=1}^N
\int_{I_m} \int_{\omega_m(t)} \abs{u(t,y)}^2 dV_h(y)\, dt
\geq
L\, T_0 \norm{u_0}_{L^2(M;\C)}^2.
\end{equation}
\end{theorem}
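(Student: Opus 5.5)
The argument follows the proof of Theorem \ref{wave-kinetic-theorem} almost verbatim, with the kinetic calibration replaced by the exact mass identity \eqref{schrodinger-full-manifold}. We work in $\mathcal H = L^2(M;\C)$ with the unitary group $e^{-it\Delta_h}$. The modal blocks are $P_k u_0 = \langle u_0,\phi_k\rangle\phi_k$, so $\mathcal H_{\leq K} = E_K$. For data $u_0 = \sum_{k\leq K} a_k\phi_k$ the solution is
\[
u(t) = \sum_{k\leq K} e^{i\lambda_k t} a_k \phi_k .
\]
With the sign convention of the equation ($i\partial_t u = \Delta_h u = -\lambda_k u$ on each mode), the phases $e^{i\lambda_k t}$ are the correct ones.

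Fix $m$ and set
\[
\mathcal F_m = \{ t\mapsto u(t,\cdot) \mid u_0\in E_{K_m}\}.
\]
This is a linear image of the $K_m$-dimensional space $E_{K_m}$, so it is finite-dimensional. Its elements are continuous (indeed analytic) in $t$ with values in $E_{K_m}\subset \mathscr C^0(M,\C)$, because eigenfunctions are smooth. Proposition \ref{exact-convexification} applied to $(E_{K_m},\omega)$ gives a design $(g_{m,j},\theta_{m,j})$. Proposition \ref{switching-realization}, applied with $\nu=1$ on $\overline{I_m}$ with tolerance $\varepsilon_m$, then gives a piecewise constant observer $\omega_m(t)$, taking values among the translates $g_{m,j}\cdot\omega$, such that
\[
\int_{I_m}\int_{\omega_m(t)}\abs{u}^2\,d\mu\,dt \geq (L-\varepsilon_m)\int_{I_m}\int_M\abs{u}^2\,d\mu\,dt
\]
for every $u_0\in E_{K_m}$.

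Set $\obs_m(u_0) = \int_{I_m}\int_{\omega_m(t)}\abs{u}^2\,dV_h\,dt$. This is a nonnegative continuous quadratic form on $L^2$. Since $dV_h = \mathrm{vol}(M)\,d\mu$, the switching inequality holds equally with $dV_h$ in place of $d\mu$. Combining it with \eqref{schrodinger-full-manifold} gives
\[
\obs_m(u_0)\geq T_0(L-\varepsilon_m)\norm{u_0}^2 \qquad \forall u_0\in\mathcal H_{\leq K_m},
\]
which is \eqref{truncated-tail-lower} with $c=T_0$. The inclusion $\omega_m(t)\subset M$ together with \eqref{schrodinger-full-manifold} gives $\obs_m\leq T_0\norm{u_0}^2$ on all of $L^2$, which is \eqref{truncated-tail-upper} with $C=T_0$. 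Proposition \ref{truncated-tail-proposition} then yields \eqref{schrodinger-asymptotic-moving}. Alternatively, one can invoke Theorem \ref{main_thm} directly with $q_z=u$, $c_{T_0}=C_{T_0}=T_0$, after the same normalization of the measure.

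There is no genuine obstacle. The only points needing care are bookkeeping:
\begin{itemize}
\item the normalization between $dV_h$ and $\mu$: both sides scale by the same factor, so $L$ is unaffected;
\item measurability and continuity of $\obs_m$ on the full space $L^2$: this follows from the upper bound;
\item checking hypothesis $(ii)$ of Theorem \ref{main_thm}, i.e.\ time-continuity of $\mathcal F_m$ into the finite-dimensional space $E_{K_m}$: this is immediate from the explicit exponential expansion.
\end{itemize}
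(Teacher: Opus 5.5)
Your proposal is correct and follows the paper's proof essentially step for step. You build $\mathcal F_m$ from data in $E_{K_m}$, apply exact convexification and then switching realization with tolerance $\varepsilon_m$, use the exact mass identity \eqref{schrodinger-full-manifold} for both the lower bound on $\mathcal H_{\leq K_m}$ and the upper bound on all of $L^2$ (so $c=C=T_0$), and conclude with Proposition \ref{truncated-tail-proposition}; your extra bookkeeping (the explicit phases $e^{i\lambda_k t}$, working on the closed interval $\overline{I_m}$, and the $dV_h$ versus $\mu$ normalization) only makes explicit points the paper leaves implicit.
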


\begin{proof}
Fix $m\in\N^*$.
Let $\mathcal{F}_m$ be the finite-dimensional trajectory space on $I_m$ formed by the solutions $u(t,\cdot)$ when $u_0$ ranges over $E_{K_m}$.
Then $\mathcal{F}_m \subset \mathscr{C}^0(I_m;E_{K_m})$ is finite-dimensional.
Apply Proposition \ref{switching-realization} to $\mathcal{F}_m$ and to the exact convexified design associated with $E_{K_m}$.
This yields a piecewise constant moving subset $t \mapsto \omega_m(t)$ on $I_m$ such that
$$
\int_{I_m} \int_{\omega_m(t)} \abs{u(t,y)}^2 dV_h(y)\, dt
\geq
(L - \varepsilon_m) \int_{I_m} \int_M \abs{u(t,y)}^2 dV_h(y)\, dt
$$
for every solution with $u_0 \in E_{K_m}$.
Using \eqref{schrodinger-full-manifold} gives
$$
\int_{I_m} \int_{\omega_m(t)} \abs{u(t,y)}^2 dV_h(y)\, dt
\geq
(L - \varepsilon_m) T_0 \norm{u_0}_{L^2(M;\C)}^2
\qquad \forall u_0 \in E_{K_m}.
$$
The upper bound
$$
\int_{I_m} \int_{\omega_m(t)} \abs{u(t,y)}^2 dV_h(y)\, dt \leq T_0 \norm{u_0}_{L^2(M;\C)}^2
$$
holds for all data because $\omega_m(t) \subset M$.
Proposition \ref{truncated-tail-proposition} then yields \eqref{schrodinger-asymptotic-moving}.
\end{proof}

\begin{remark}
Theorem \ref{schrodinger-local-mass-theorem} is obtained by exactly the same mechanism as the wave/Klein-Gordon results above: exact convexification on one finite spectral window, switching realization on the associated finite-dimensional trajectory space, and tail reduction.
The only simplification is that the full-manifold calibration is the trivial identity \eqref{schrodinger-full-manifold}.
\end{remark}

\begin{remark}\label{sphere-schrodinger-novelty}
As an example, take $M = \mathbb{S}^2$ with the round metric and let $\omega$ be a sufficiently small polar cap.
For fixed observation domains, finite-time Schr\"odinger observability on $\mathbb{S}^2$ may fail for such caps because highest-weight spherical harmonics concentrate near the equator, see \cite{MaciaRiviere2019}.
Theorem \ref{schrodinger-local-mass-theorem} shows that, by moving the cap according to the convexified interval-by-interval design, one nevertheless recovers the full $L^2$ mass in the deterministic Ces\`aro sense \eqref{schrodinger-asymptotic-moving}.
This provides a concrete large-time observability statement on a positively curved compact manifold beyond the available fixed-domain theory.

On flat tori, fixed-domain Schr\"odinger observability from any nonempty open subset is known and is much stronger than the present asymptotic statement, see \cite{AnantharamanMacia2014, BurqZworski2012, Jaffard1990}; on compact hyperbolic surfaces, see \cite{Jin2018}.
For these geometries, the point is therefore not to improve the known fixed-domain observability results.
Rather, Theorem \ref{schrodinger-local-mass-theorem} shows that the same convexification-switching-tail-reduction mechanism also produces an explicit moving-observer design in settings where stronger fixed-domain theorems are already available.
On the sphere, by contrast, the moving-cap conclusion is new.
\end{remark}

\subsection{Boundary moving observations}\label{sec_boundary}
We next turn to boundary observations.
For readability we formulate the results for one distinguished boundary component, but the same argument applies to one fixed transverse section as well.
Thus one may keep in mind either a boundary manifold $M \simeq \partial X$ or, more generally, an interior section $\Sigma_a \simeq M$ at some fixed normal coordinate $x = a$.
The geometric contrast with the interior subsection above remains the same: now the observation manifold is not the whole ambient manifold but one codimension-one copy of $M$.

\subsubsection{Separated boundary theorem}
For $\ell\in(0,+\infty]$, let
$$
X = [0,\ell]_x \times M_y,
$$
where $(M,\mu)$ is a compact measured homogeneous space.
Let $(\mathcal U(t))_{t \in \R}$ be the strongly continuous unitary group on a Hilbert energy space $\mathcal H$ generated by the conservative PDE under consideration on $X$.
For each datum $z \in \mathcal H$, let $q_z(t,y)$ be the chosen scalar boundary output measured on one distinguished copy of $M$, for instance on $\{0\} \times M$, on $\{\ell\} \times M$, or on one fixed transverse section $\{a\} \times M$ for some $a>0$.

Assume that the evolution admits a separated modal decomposition
$$
\mathcal H = \bigoplus_{k\in\N^*} \mathcal H_k,
$$
associated with an orthonormal basis $(\phi_k)_{k\in\N^*}$ of eigenfunctions of $-\Delta_h$ on $M$.
For $K \in \N^*$, set
$$
E_K = \mathrm{Span}\{ \phi_1,\dots,\phi_K \},
\qquad
\mathcal H_{\leq K} = \bigoplus_{k \leq K} \mathcal H_k.
$$
Assume that, for every $K$, the traces $q_z(t,\cdot)$ associated with data $z \in \mathcal H_{\leq K}$ belong to $E_K$ for every $t$.
We also assume that there exist $T_0 > 0$ and constants $0 < c_{T_0} \leq C_{T_0}$ such that, for every interval $I \subset \R$ of length $T_0$,
\begin{equation}\label{boundary-full-calibration}
c_{T_0}\, \energy(z)
\leq
\int_I \int_M \abs{q_z(t,y)}^2 d\mu(y)\, dt
\leq
C_{T_0}\, \energy(z)
\qquad \forall z \in \mathcal H.
\end{equation}
In separated models, \eqref{boundary-full-calibration} is a one-dimensional normal observability estimate, typically proved after tangential separation by a nonharmonic Fourier argument of Ingham type (see \cite{Ingham1936, KL2005} for the general harmonic method and, e.g., \cite{Mehrenberger2009} for an explicit boundary observability implementation on product domains).

\begin{theorem}\label{boundary-separated-theorem}
Fix a measurable prototype observation subset $\omega \subset M$ with $\mu(\omega) = L$, let $(K_m)_{m\in\N^*}$ be any increasing sequence in $\N$ such that $K_m \to +\infty$, and let $(\varepsilon_m)_{m\in\N^*}$ be any sequence in $(0,L)$ such that $\varepsilon_m \to 0$.
Then, for each $m\in\N^*$, there exists a finite exact convexified design for $(E_{K_m},\omega)$ given by Proposition \ref{exact-convexification}, and Proposition \ref{switching-realization} applied to the corresponding boundary trajectory space on $I_m$ yields a piecewise constant measurable map $t \mapsto \omega_m(t) \subset M$, taking only values among the translates selected by that design, such that
\begin{equation}\label{boundary-separated-asymptotic}
\liminf_{N \to +\infty}
\frac{1}{N}
\sum_{m=1}^N
\int_{I_m} \int_{\omega_m(t)} \abs{q_z(t,y)}^2 d\mu(y)\, dt
\geq
L\, c_{T_0}\, \energy(z)
\qquad \forall z \in \mathcal H.
\end{equation}
\end{theorem}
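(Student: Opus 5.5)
The plan is to verify the hypotheses of Theorem \ref{main_thm} in the separated boundary setting, with $P_k$ the orthogonal projection onto $\mathcal H_k$ and $E_m = E_{K_m}$, and then repeat the three-step scheme already used in the proof of Theorem \ref{wave-kinetic-theorem}.

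Hypothesis $(i)$ follows directly from the calibration \eqref{boundary-full-calibration} applied with $I = I_m$, which has length $T_0$. The upper bound \eqref{fullobs_upper} holds for all $z \in \mathcal H$. For the lower bound \eqref{fullobs_below}, take $z \in \mathcal H_{\leq K_m}$. Then $\energy(z) = \sum_{k \leq K_m} \energy_k(z)$, so the left inequality in \eqref{boundary-full-calibration} gives exactly \eqref{fullobs_below}.

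Hypothesis $(ii)$ is where some care is needed. By assumption, $q_z(t,\cdot) \in E_{K_m}$ for every $t$ when $z \in \mathcal H_{\leq K_m}$. The map $z \mapsto q_z$ is linear, so $\mathcal F_m$ is the image of $\mathcal H_{\leq K_m}$ under a linear map. This image is finite-dimensional provided each block $\mathcal H_k$ is finite-dimensional. In separated models this is the case: each tangential mode $\phi_k$ carries finitely many normal modes in the relevant window. If $\ell = +\infty$ or the normal spectrum is infinite, I would instead define $\mathcal H_{\leq K}$ via a joint truncation in the tangential and normal indices, and that is the point I would check first. Continuity in time also has to be established. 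I would write $q_z(t,\cdot) = \sum_{k \leq K_m} \alpha_k(t)\phi_k$, where $\alpha_k(t) = \langle q_z(t,\cdot),\phi_k\rangle_{L^2(M,\mu)}$. Each $\alpha_k$ is a finite combination of exponentials $e^{i\lambda t}$ coming from the unitary group restricted to a finite-dimensional invariant subspace. In particular each $\alpha_k$ is continuous, so $\mathcal F_m \subset \mathscr C^0(I_m;E_{K_m})$.

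With both hypotheses in place, the argument proceeds interval by interval. On each $I_m$, Proposition \ref{exact-convexification} applied to $(E_{K_m},\omega)$ gives the design. Proposition \ref{switching-realization} with tolerance $\varepsilon_m$ gives the piecewise constant map $\omega_m(t)$, taking values only among the selected translates, and this yields \eqref{realization-band-ineq}. Proposition \ref{truncated-tail-proposition} is then applied to $\obs_m(z) = \int_{I_m}\int_{\omega_m(t)} \abs{q_z}^2$, with $c = c_{T_0}$ and $C = C_{T_0}$, and it gives \eqref{boundary-separated-asymptotic}. The upper bound needed there uses $\omega_m(t) \subset M$. I expect the main obstacle to be the finite-dimensionality and continuity of $\mathcal F_m$, which depends on the structure of the blocks $\mathcal H_k$. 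The rest is the formal chain of Theorem \ref{main_thm}.
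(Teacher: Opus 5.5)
Your proposal is correct and is essentially the paper's proof. The paper reruns the chain of Theorem \ref{main_thm} directly on each $I_m$: Proposition \ref{exact-convexification} on $E_{K_m}$, Proposition \ref{switching-realization} on $\mathcal F_m$ with tolerance $\varepsilon_m$, the lower half of \eqref{boundary-full-calibration} on $\mathcal H_{\leq K_m}$, the upper half via $\omega_m(t) \subset M$, and finally Proposition \ref{truncated-tail-proposition} with $c = c_{T_0}$ and $C = C_{T_0}$.

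The one difference is that the paper simply asserts that $\mathcal F_m \subset \mathscr{C}^0(I_m;E_{K_m})$ is finite-dimensional, and you are right to scrutinize that step. In genuinely separated models each block $\mathcal H_k$ carries infinitely many normal modes (for instance all Bessel zeros $j_{\nu_\ell,p}$, $p \in \N^*$, on the ball), so your sentence claiming the blocks are finite-dimensional is wrong in general. What actually makes the switching step and the time-continuity legitimate is your fallback: a joint tangential--normal truncation. It still fits the design on $E_{K_m}$, and Proposition \ref{truncated-tail-proposition} admits it, since its proof only needs the truncation projections to converge strongly to the identity.
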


\begin{proof}
Fix $m\in\N^*$.
Let $\mathcal F_m$ be the finite-dimensional trajectory space on $I_m$ formed by the scalar outputs $q_z(t,\cdot)$ when $z$ ranges over $\mathcal H_{\leq K_m}$.
Then $\mathcal F_m \subset \mathscr{C}^0(I_m;E_{K_m})$ is finite-dimensional.
Apply Proposition \ref{switching-realization} to $\mathcal F_m$ and to the exact convexified design associated with $E_{K_m}$.
This yields a piecewise constant moving subset $t \mapsto \omega_m(t)$ on $I_m$ such that
$$
\int_{I_m} \int_{\omega_m(t)} \abs{q_z(t,y)}^2 d\mu(y)\, dt
\geq
(L - \varepsilon_m)
\int_{I_m} \int_M \abs{q_z(t,y)}^2 d\mu(y)\, dt
$$
for every $z \in \mathcal H_{\leq K_m}$.
Using \eqref{boundary-full-calibration}, one gets
$$
\int_{I_m} \int_{\omega_m(t)} \abs{q_z(t,y)}^2 d\mu(y)\, dt
\geq
(L - \varepsilon_m) c_{T_0}\, \energy(z)
\qquad \forall z \in \mathcal H_{\leq K_m}.
$$
The upper bound
$$
\int_{I_m} \int_{\omega_m(t)} \abs{q_z(t,y)}^2 d\mu(y)\, dt
\leq
C_{T_0}\, \energy(z)
$$
holds for all data because $\omega_m(t) \subset M$.
Proposition \ref{truncated-tail-proposition} then yields \eqref{boundary-separated-asymptotic}.
\end{proof}

\begin{remark}\label{boundary-almost-separated-remark}
Exact separation is used above only to identify, on each spectral window, a finite-dimensional boundary trace space living on the observation manifold $M$.
The same proof would apply without essential change to regular \emph{almost-separated} geometries as soon as one can establish, by perturbation or quasi-normal-form arguments, two analogous ingredients: a full-boundary calibration estimate of the form \eqref{boundary-full-calibration}, and a finite-dimensional trace-space inclusion for the truncated trajectories.
Natural candidates are smooth quasi-product or shell-like perturbations of separated geometries.
Because this robustness analysis is strongly model-dependent, we do not formulate a general theorem here.
The gas-giant example treated in Section \ref{sec_gas_giants} should be viewed as a singular almost-separated instance of this broader philosophy.
\end{remark}

\subsubsection{Wave and Klein-Gordon equations on the Euclidean ball}

\begin{corollary}\label{euclidean-ball-corollary}
Let $X = \overline B(0,1) \subset \R^{n+1}$ be the closed Euclidean unit ball, and consider the Dirichlet wave or Klein-Gordon equation
$$
\partial_t^2 u - \Delta u + \mathfrak{m}^2 u = 0,
\qquad
u\vert_{\partial X} = 0.
$$
Let $q(t,\theta) = \partial_r u(t,1,\theta)$ be the boundary normal derivative on $M = \partial X = \mathbb{S}^n$ endowed with its normalized surface measure $\mu$.
Fix $T_0 > 2$, let $\omega \subset \mathbb S^n$ be a measurable boundary cap with $\mu(\omega) = L$, let $(K_m)_{m\in\N^*}$ be any increasing sequence such that $K_m \to +\infty$, and let $(\varepsilon_m)_{m\in\N^*}$ be any sequence in $(0,L)$ with $\varepsilon_m \to 0$.
Then, for each $m\in\N^*$, one applies Proposition \ref{exact-convexification} to the spherical-harmonic space $E_{K_m}$ on $\mathbb S^n$ and then Proposition \ref{switching-realization} to the corresponding boundary trajectory space on $I_m$.
This yields piecewise constant moving boundary subsets $t \mapsto \omega_m(t) \subset \mathbb S^n$, each taking only rotational copies of $\omega$, such that
\begin{equation*}%\label{euclidean-ball-asymptotic}
\liminf_{N \to +\infty}
\frac{1}{N} \sum_{m=1}^N \int_{I_m} \int_{\omega_m(t)} \abs{\partial_r u(t,1,\theta)}^2 d\mu(\theta)\, dt \geq L\, c_{T_0}\, \energy(u_0,u_1)
\end{equation*}
for every finite-energy initial datum.
\end{corollary}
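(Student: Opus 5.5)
The plan is to derive the corollary from Theorem \ref{boundary-separated-theorem}, with $X$ treated as a polar product. In polar coordinates $(r,\theta)$ the ball is $(0,1]_r \times \mathbb S^n_\theta$, with $x = 1-r$. The observation manifold $M = \mathbb S^n$ with normalized surface measure is a compact measured homogeneous space under $G = SO(n+1)$, so Proposition \ref{exact-convexification} applies to any cap $\omega$. The separated modal decomposition comes from spherical harmonics.

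Let $(\phi_k)$ be an orthonormal basis of $L^2(\mathbb S^n)$ of spherical harmonics ordered by degree, with $-\Delta_{\mathbb S^n}\phi_k = \ell_k(\ell_k+n-1)\phi_k$. Every Dirichlet eigenfunction of the ball has the form $r^{-(n-1)/2}J_{\nu_k}(j_{\nu_k,p} r)\phi_k(\theta)$, where $\nu_k = \ell_k + (n-1)/2$. Let $\mathcal H_k$ be the closed span of the corresponding solutions, with temporal frequencies $\sqrt{j_{\nu_k,p}^2 + \mathfrak m^2}$, $p \geq 1$. These blocks are orthogonal in the energy space and exhaust it.

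For data in $\mathcal H_{\leq K}$, the trace $\partial_r u(t,1,\cdot)$ lies in $E_K$ for every $t$. Hypothesis $(ii)$ of Theorem \ref{main_thm} also requires the trajectory space $\mathcal F_m$ on $I_m$ to be finite dimensional and continuous in time, but each $\mathcal H_k$ is infinite dimensional in the radial index $p$. I would handle this by refining the windows: take $\mathcal H_{\leq K}$ to be spanned by the pairs $(k,p)$ with $k,p \leq K$. The projections then remain pairwise orthogonal and exhaust $\mathcal H$. The traces are finite sums of $e^{\pm i\omega_{k,p}t}\phi_k$, so $\mathcal F_m \subset \mathscr C^0(I_m;E_{K_m})$ is finite dimensional. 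Alternatively, since $\mathcal F_m$ must be contained in $\mathscr C^0(I_m; E^\nu)$, one could invoke Theorem \ref{main_thm} directly with this finer indexing.

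The substantive step is the full-boundary calibration \eqref{boundary-full-calibration} for $T_0>2$. Because the modes are orthogonal on $\mathbb S^n$, the boundary integral splits as
$$\int_I\int_{\mathbb S^n}\abs{\partial_r u}^2\,d\mu\,dt=\sum_k \int_I \Big|\sum_{p}\big(c^+_{k,p}e^{i\omega_{k,p}t}+c^-_{k,p}e^{-i\omega_{k,p}t}\big)\, j_{\nu_k,p}J_{\nu_k}'(j_{\nu_k,p})\Big|^2dt,$$
up to the normalization constant of $\mu$. Two facts are then needed. First, the Rellich identity gives $\abs{\partial_r\psi}^2$ on the boundary in terms of the eigenvalue for the normalized eigenfunction $\psi$. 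Equivalently, $j_{\nu,p}^2 J_\nu'(j_{\nu,p})^2$ is comparable to $j_{\nu,p}^2\norm{\cdot}^2$ uniformly in $\nu$ and $p$, which matches each mode's energy. Second, an Ingham inequality in $t$ is needed, uniform in $k$. This requires a uniform asymptotic gap $\liminf_p (j_{\nu,p+1}-j_{\nu,p}) \geq \pi$, which gives observability for time $T_0 > 2\pi/\pi = 2$. Klein-Gordon frequencies $\sqrt{j^2+\mathfrak m^2}$ have gaps no smaller asymptotically. This uniform gap in $\nu$ is the main obstacle, since Ingham's constant must not degenerate as $\nu\to\infty$.

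My first attempt at the obstacle would bypass Ingham and obtain calibration from the classical multiplier identity on the ball. With the multiplier $x\cdot\nabla u$, whose geometric constant gives $T_0 > 2R = 2$, one gets the lower bound $\int_0^{T}\!\int_{\partial X}\abs{\partial_\nu u}^2 \geq (T-2)\,\energy$ for every solution. The hidden-regularity upper bound holds on every interval. For Klein-Gordon the lower-order mass term would be absorbed by a compactness-uniqueness argument. This gives \eqref{boundary-full-calibration} for all data at once, which then applies on each window. With the calibration and the window structure in place, Theorem \ref{boundary-separated-theorem} (or Theorem \ref{main_thm}) yields the stated inequality. The moving sets are rotations $g_{m,j}\cdot\omega$ with $g_{m,j}\in SO(n+1)$, so each $\omega_m(t)$ is a rotated copy of the cap.
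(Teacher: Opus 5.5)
Your proposal is correct and follows the same skeleton as the paper: spherical-harmonic separation, $SO(n+1)$-homogeneity of $\mathbb S^n$, full-boundary calibration for $T_0>2$, then the abstract machinery. It departs from the paper at two points.

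\textbf{Spectral windows.} The paper feeds the tangential windows $\mathcal H_{\leq K}=\bigoplus_{k\leq K}\mathcal H_k$ directly into Theorem \ref{boundary-separated-theorem} and asserts that the boundary trajectory space is finite-dimensional. As you observe, each $\mathcal H_k$ contains all radial frequencies $j_{\nu_k,p}$, $p\geq 1$. So that space is infinite-dimensional, and its elements are in general only $L^2$ in time. The compactness of $\mathscr S$ and the equicontinuity used in Proposition \ref{switching-realization} are then unavailable. One can even expect genuine failure: radial wave packets concentrate their boundary flux near isolated instants, at which the observer occupies a single translate $g_j\cdot\omega$. Your truncation to pairs $(k,p)$ with $k,p\leq K$, with $P_K$ the projection onto the shell $\max(k,p)=K$, repairs this. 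It keeps the spatial profile space equal to $E_K$ and makes $\mathcal F_m\subset\mathscr C^0(I_m;E_{K_m})$ genuinely finite-dimensional. It costs nothing, because Proposition \ref{truncated-tail-proposition} only needs exhausting windows. The only price is that you must go through Theorem \ref{main_thm} with this finer indexing rather than Theorem \ref{boundary-separated-theorem} as literally stated. On this point your argument is more careful than the paper's.

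\textbf{Calibration.} For \eqref{boundary-full-calibration} the paper simply invokes Bardos--Lebeau--Rauch: every generalized bicharacteristic of the ball meets $\partial X$ within time $2$. That handles wave and Klein--Gordon in one stroke. You instead use the Lions multiplier $x\cdot\nabla u$ plus hidden regularity. This is more elementary and gives the explicit constant $c_{T_0}\propto T_0-2$ when $\mathfrak m=0$, at the cost of a non-explicit compactness--uniqueness step when $\mathfrak m\neq 0$. In both routes, energy conservation transfers the estimate to every $I_m$. Your Ingham digression is unnecessary and can be dropped.
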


\begin{proof}
In polar coordinates $(r,\theta)$, the Dirichlet eigenfunctions are $r^{-\frac{n-1}{2}} J_{\nu_\ell}(j_{\nu_\ell,p} r)\, Y_{\ell,m}(\theta)$, where $Y_{\ell,m}$ are spherical harmonics on $\mathbb S^n$, $\nu_\ell = \ell + \frac{n-1}{2}$, and $j_{\nu_\ell,p}$ denotes the $p$-th positive zero of the Bessel function $J_{\nu_\ell}$.
Hence the boundary output $\partial_r u(t,1,\cdot)$ belongs, on each tangential spectral window, to the corresponding finite-dimensional spherical-harmonic space on $\mathbb S^n$.
The full-boundary calibration estimate \eqref{boundary-full-calibration} for every $T_0 > 2$ follows from the classical boundary observability theorem of \cite{BLR1992}, since every generalized bicharacteristic of the Euclidean unit ball meets the full boundary within time $2$.
Therefore Theorem \ref{boundary-separated-theorem} applies.
\end{proof}

\begin{remark}\label{euclidean-ball-novelty-remark}
A fixed small cap on $\partial B(0,1)$ does not satisfy the classical boundary GCC, and a prescribed moving cap need not satisfy the time-dependent GCC either.
Corollary \ref{euclidean-ball-corollary} therefore yields a completely new deterministic Ces\`aro asymptotic boundary observability inequality.
\end{remark}

\subsubsection{Gas-giant boundary observations as a singular almost-separated extension}\label{sec_gas_giants}
We now turn to a singular almost-separated boundary model arising in acoustic geometry near gas-giant surfaces.
The Euclidean-ball corollary above is the regular separated prototype, while the gas-giant geometry developed in \cite{CdVDdHT, dHIKM2024, dHKT2026} is its singular and almost-separated analogue.
The observation manifold is the outer boundary $M = \partial X$, endowed with the normalized surface measure $\mu = \frac{dV_{\partial}}{\int_M dV_{\partial}}$, where $dV_{\partial}$ is the Riemannian surface measure associated with the limiting boundary metric.
Because $M$ is assumed to be diffeomorphic to $\mathbb{S}^n$, Remark \ref{homogeneous-space-remark} provides the measured homogeneous-space structure required for exact convexification.

The gas-giant model is not exactly separated.
The Laplacian is shown in \cite{CdVDdHT} to admit an almost-separated quasi-isometric normal form, and \cite{dHKT2026} derives from that structure the full-boundary calibration estimate and the finite-dimensional tangential trace spaces on growing spectral windows.
What the present paper adds is the missing geometric-dynamical step: once those finite-dimensional trace spaces are known, exact convexification and switching realization produce the moving caps themselves.

\begin{corollary}\label{gas-giant-corollary}
In the gas-giant setting of \cite{dHKT2026}, let $\omega \subset M$ be a prototype boundary cap with relative measure $L = \mu(\omega)$, let $(K_m)_{m\in\N^*}$ be any increasing sequence such that $K_m \to +\infty$, and let $(\varepsilon_m)_{m\in\N^*}$ be any sequence in $(0,L)$ with $\varepsilon_m \to 0$.
Then, for each $m\in\N^*$, Proposition \ref{exact-convexification} applied to the boundary trace space $E_m$ and the prototype cap $\omega$, followed by Proposition \ref{switching-realization} on the associated finite-dimensional trajectory space on $I_m$, yields a piecewise constant moving boundary subset $t \mapsto \omega_m(t) \subset M$ such that
\begin{equation}\label{gas-giant-obs-ineq}
\liminf_{N \to +\infty} \frac{1}{N} \sum_{m=1}^N \int_{I_m} \int_{\omega_m(t)} \abs{q_z(t,y)}^2 d\mu(y)\, dt \geq L\, c_{T_0}\, \energy(z)
\qquad \forall z \in \mathcal H.
\end{equation}
\end{corollary}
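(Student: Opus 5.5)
The plan is to derive Corollary \ref{gas-giant-corollary} directly from Theorem \ref{main_thm}. The only work is to check its hypotheses $(i)$ and $(ii)$ in the gas-giant setting, importing the model-dependent inputs from \cite{CdVDdHT, dHKT2026}. The first step is to fix the structural framework. By Remark \ref{homogeneous-space-remark}, $M$ is diffeomorphic to $\mathbb S^n$ and $\mu$ is a smooth positive probability density, so Moser's theorem conjugates the rotation action of $SO(n+1)$ into a continuous, transitive, $\mu$-preserving action of a compact group $G$ on $M$. Hence $(M,\mu)$ is a compact measured homogeneous space, and Proposition \ref{exact-convexification} is available with $L=\mu(\omega)\in(0,1]$. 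The translates $g\cdot\omega$ are then the pullbacks of rotated caps; they need not be metric caps, which is harmless since only measure invariance is used.

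Second, I will take from \cite{dHKT2026} the unitary group $\mathcal U(t)$ on the energy space $\mathcal H$, the modal projections $P_k$, and the boundary output $q_z$. The key quoted fact is the full-boundary calibration estimate: for intervals of length $T_0$,
$$
c_{T_0}\sum_{k\leq K}\energy_k(z)\leq \int_I\int_M\abs{q_z}^2\,d\mu\,dt \leq C_{T_0}\energy(z),
$$
with the lower bound valid on every $\mathcal H_{\leq K}$ and constants uniform in the interval position. This is precisely hypothesis $(i)$, namely \eqref{fullobs_upper}--\eqref{fullobs_below}. Uniformity in $m$ should follow from unitarity of $\mathcal U$: translating the interval $I_m$ back to $(0,T_0)$ replaces $z$ by $\mathcal U((m-1)T_0)z$. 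For this to work, $\mathcal U$ must commute with the $P_k$, so that the windows $\mathcal H_{\leq K_m}$ and the energies $\energy_k$ are preserved. I will check that the almost-separated normal form provides this invariance.

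Third, for hypothesis $(ii)$, I will use the finite-dimensional tangential trace spaces $E_m\subset\mathscr C^0(M,\C)$ from \cite{dHKT2026}. These contain $q_z(t,\cdot)$ for all $z\in\mathcal H_{\leq K_m}$ and $t\in I_m$. Since $\mathcal H_{\leq K_m}$ is finite-dimensional and $z\mapsto q_z$ is linear, the space $\mathcal F_m$ is finite-dimensional. Continuity in time holds because the truncated solutions are finite sums of time-exponentials times fixed profiles in $E_m$. With $(i)$ and $(ii)$ established, Theorem \ref{main_thm}, through Remark \ref{main-construction-remark}, produces the designs $(g_{m,j},\theta_{m,j})$ and the switching observers $\omega_m(t)$, and yields \eqref{gas-giant-obs-ineq}.

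I expect the main obstacle to be the second and third steps. The singular almost-separated structure means the modal blocks are only quasi-separated. One must therefore make sure that the windows used in \cite{dHKT2026} are genuine orthogonal projections summing to the identity and invariant under the flow. One must also ensure that the trace output is well defined and bounded despite the boundary singularity, which is what gives \eqref{fullobs_upper}. My first approach is to define $P_k$ as spectral projections of the generator grouped by tangential index, which automatically commute with $\mathcal U$. I would then verify that the cited calibration is formulated with respect to exactly these blocks.
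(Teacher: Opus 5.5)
Your proposal is correct and is essentially the paper's argument: the paper imports from \cite{dHKT2026} the full-boundary calibration on each $I_m$ (lower bound on $\mathcal H_{\leq K_m}$, upper bound on all of $\mathcal H$) and the finite-dimensional spaces $\mathcal F_m \subset \mathscr C^0(I_m;E_m)$, then runs convexification, switching and Proposition \ref{truncated-tail-proposition}, which is just the proof of Theorem \ref{main_thm} written out, so your unitarity argument for uniformity in $m$ is an optional extra. One caution: take the finite-dimensionality of $\mathcal F_m$ directly from \cite{dHKT2026}, as the paper does, instead of deducing it from $\dim \mathcal H_{\leq K_m} < \infty$, which the abstract framework does not guarantee; in particular, if you define $P_k$ by grouping spectral projections by tangential index, each block contains infinitely many normal modes, $\mathcal F_m$ becomes infinite-dimensional, and the compactness argument behind Proposition \ref{switching-realization} no longer applies.
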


\begin{proof}
For each $m\in\N^*$, the analysis of \cite{dHKT2026} provides:
\begin{itemize}
\item[$(a)$] a full-boundary calibration estimate of the form
$$
c_{T_0}\, \energy(z) \leq \int_{I_m} \int_M \abs{q_z(t,y)}^2 d\mu(y)\, dt \leq C_{T_0}\, \energy(z)
$$
for all $z \in \mathcal H_{\leq K_m}$ and the corresponding upper bound for all $z \in \mathcal H$;
\item[$(b)$] a finite-dimensional boundary trace space $E_m \subset \mathscr{C}^0(M,\C)$ such that
$$
\mathcal F_m = \left\{ t \mapsto q_z(t,\cdot)\ \mid\ z \in \mathcal H_{\leq K_m} \right\} \subset \mathscr{C}^0(I_m;E_m)
$$
is finite-dimensional.
\end{itemize}
Apply Proposition \ref{exact-convexification} to $(E_m,\omega)$ and then Proposition \ref{switching-realization} to $\mathcal F_m$ with the tolerance $\varepsilon_m$.
This yields a piecewise constant moving cap $t \mapsto \omega_m(t)$ on $I_m$ such that
$$
\int_{I_m} \int_{\omega_m(t)} \abs{q_z(t,y)}^2 d\mu(y)\, dt
\geq
(L - \varepsilon_m)
\int_{I_m} \int_M \abs{q_z(t,y)}^2 d\mu(y)\, dt
$$
for every $z \in \mathcal H_{\leq K_m}$.
Using the lower full-boundary calibration on $\mathcal H_{\leq K_m}$ gives
$$
\int_{I_m} \int_{\omega_m(t)} \abs{q_z(t,y)}^2 d\mu(y)\, dt
\geq
(L - \varepsilon_m) c_{T_0}\, \energy(z)
\qquad \forall z \in \mathcal H_{\leq K_m},
$$
while the upper bound on the whole boundary gives
$$
\int_{I_m} \int_{\omega_m(t)} \abs{q_z(t,y)}^2 d\mu(y)\, dt
\leq
C_{T_0}\, \energy(z)
\qquad \forall z \in \mathcal H.
$$
Proposition \ref{truncated-tail-proposition} therefore yields \eqref{gas-giant-obs-ineq}.
\end{proof}

\begin{remark}\label{gas-giant-novelty-remark}
Corollary \ref{gas-giant-corollary} is the singular almost-separated member of the family of examples developed in this paper.
It shows that the same finite-window plus switching plus tail-reduction mechanism produces concrete deterministic Ces\`aro asymptotic observability inequalities both in regular separated geometries and in singular almost-separated ones.
The singular metric is not the source of the moving-design mechanism; rather, the novelty of the gas-giant model is that the required calibration and trace-space reduction survive the quasi-isometric almost-separated normal form of \cite{CdVDdHT, dHKT2026}.
In particular, the corollary establishes precisely the moving-cap boundary observability statement announced at the end of \cite{dHKT2026}.
\end{remark}

\subsection{Other conservative systems}\label{sec_other-systems}
The abstract mechanism is not tied to scalar wave equations.
What it really requires is:
\begin{itemize}
\item[$(i)$] a conservative Hilbert dynamics carrying a modal decomposition;
\item[$(ii)$] a compact measured homogeneous observation manifold and a prototype localized observation subset;
\item[$(iii)$] finite-dimensional profile spaces on the growing spectral windows;
\item[$(iv)$] calibration estimates on the whole observation manifold or on the full boundary or section.
\end{itemize}
Whenever these ingredients are available, the conclusion has the same form as above: a deterministic Ces\`aro asymptotic observability inequality with exact factor $L$.

A first family of further examples is provided by beam, plate, and elastic systems on product, warped-product, or almost-separated geometries.
Typical model equations are the Euler-Bernoulli or Timoshenko beam equations, fourth-order plate equations such as $\partial_t^2 u + \Delta^2 u = 0$ on shells of the form $[0,\ell] \times M$, and linear elasticity systems $\partial_t^2 U + \mathcal L U = 0$ on domains with separated or almost-separated tangential variables.
The relevant observation operator is then a boundary trace, a bending moment, a traction or stress component, or another scalar quantity whose low spectral windows give rise to finite-dimensional profile spaces on the observation manifold.
Once the calibration estimates and the interval realization estimate are established, the same exact convexification plus Ces\`aro mechanism yields asymptotic observability from moving localized observations.

\begin{remark}
The comparison with GCC is model-dependent in these further examples.
For beams, plates, shells, or elastic systems, the currently available finite-time geometric theories are much less complete than for the scalar wave equation.
This is another reason why the deterministic large-time mechanism isolated here may be useful: it produces a robust asymptotic conclusion once one can design and realize the interval-by-interval observation protocol.
For neighbouring results with time-dependent or moving observation sets in rather different geometries, see \cite{JamingKomornik2020} for moving or oblique observations of the one-dimensional Schr\"odinger equation and of beam/plate equations, and \cite{WangWang2024} for observability of dispersive equations from line segments on the torus.
For classical fixed-domain control of a higher-order model, one may also compare with the exact internal control result of \cite{Jaffard1990} for rectangular plate vibrations.
\end{remark}

\paragraph{Acknowledgment.}

MVdH was supported by the National Science Foundation under grant DMS-2407456, the Simons Foundation under the MATH + X program and the corporate members of the Geo-Mathematical Imaging Group at Rice University.
ET acknowledges the support of the grant ANR-23-CE40-0010-02 (Einstein-PPF).

\end{document}